\newcommand{\A}{\mathbb{A}}
\newcommand{\Q}{\mathbb{Q}}
\newcommand{\Z}{\mathbb{Z}}
\newcommand{\PP}{\mathbb{P}}
\newcommand{\calF}{\mathcal{F}}
\newcommand{\calP}{\mathcal{P}}
\newcommand{\calR}{\mathcal{R}}
\newcommand{\calM}{\mathcal{M}}
\newcommand{\calS}{\mathcal{S}}
\newcommand{\calU}{\mathcal{U}}
\newcommand{\ratmap}{\dashedrightarrow}
\newcommand{\Aut}{\operatorname{Aut}}
\newcommand{\disc}{\operatorname{disc}}
\newcommand{\Gal}{\operatorname{Gal}}
\renewcommand{\Im}{\operatorname{Im}}
\newcommand{\Per}{\operatorname{Per}}
\newcommand{\PGL}{\operatorname{PGL}}
\newcommand{\Rat}{\operatorname{Rat}}
\newcommand{\Sym}{\operatorname{Sym}}
\newtheorem{thm}{Theorem}[section]
\newtheorem{lem}[thm]{Lemma}
\newtheorem{prop}[thm]{Proposition}
\newtheorem{cor}[thm]{Corollary}
\theoremstyle{definition}
\newtheorem{rem}[thm]{Remark}
\newtheorem{notation}[thm]{Notation}
\numberwithin{equation}{section}
\title[Dynatomic groups of quadratic rational maps]{Dynatomic Galois groups for a family of quadratic rational maps}
\author[D. Krumm]{David Krumm}
\email{david.krumm@gmail.com}
\urladdr{http://maths.dk}
\author[A. Lacy]{Allan Lacy}
\address{School of Mathematics, University of Costa Rica}
\email{allan.lacy@ucr.ac.cr}
\keywords{Dynatomic polynomials, periodic points, Galois specialization}
\subjclass{Primary 37P05; Secondary 11S20, 11G30}
\begin{document}
\maketitle

\begin{abstract}
For every nonconstant rational function $\phi\in\Q(x)$, the Galois groups of the dynatomic polynomials of $\phi$ encode various properties of $\phi$ that are of interest in the subject of arithmetic dynamics. We study here the structure of these Galois groups as $\phi$ varies in a particular one-parameter family of maps, namely the quadratic rational maps having a critical point of period 2. In particular, we provide explicit descriptions of the third and fourth dynatomic Galois groups for maps in this family.
\end{abstract}

\section{Introduction}\label{intro_section}

Let $k$ be a field of characteristic $0$ and $\phi\in k(x)$ a nonconstant rational function. The \textbf{degree} of $\phi$ is given by $\deg\phi=\max\{\deg p,\deg q\}$ if $p,q\in k[x]$ are coprime polynomials such that $\phi=p/q$. Corresponding to $\phi$ there is a morphism of algebraic varieties $\phi:\PP^1\to\PP^1$; fixing an algebraic closure $\bar k$ of $k$, this morphism gives rise to a map on point sets $\phi:\PP^1(\bar k)\to\PP^1(\bar k)$.

For every positive integer $n$, we denote by $\phi^n$ the $n$-fold composition of $\phi$ with itself. A point $P\in\PP^1(\bar k)$ is called \textbf{periodic} for $\phi$ if there exists $n\ge 1$ such that $\phi^n(P)=P$; in that case, the least such integer $n$ is the \textbf{period} of $P$, and $P$ is an \textbf{$n$-periodic point} of $\phi$. Letting $\Per_n(\phi)$ denote the (finite) set of all $n$-periodic points of $\phi$, the $n$th \textbf{dynatomic field} of $\phi$, denoted $k_{n,\phi}$, is the composite of all extensions $k(P)/k$ for $P\in\Per_n(\phi)$, where $k(P)$ denotes the field of definition of $P$. A simple argument shows that every automorphism of the extension $\bar k/k$ maps the set $\Per_n(\phi)$ to itself; thus $k_{n,\phi}/k$ is a Galois extension. The automorphism group $G_{n,\phi}:=\Gal(k_{n,\phi}/k)$ is the $n$th \textbf{dynatomic group} of $\phi$, and can be regarded as encoding various arithmetic-dynamical properties of the map $\phi$. Letting $r=(\#\Per_n(\phi))/n$, it is well known that $G_{n,\phi}$ can be embedded in the wreath product $(\Z/n\Z)\wr S_r$, where $S_r$ is the symmetric group of degree $r$ (see \cite{silverman}*{Theorem 3.56}).

\subsection{Families of dynatomic groups} In this paper we address the question of how the structure of the groups $G_{n,\phi}$ will change if the map $\phi$ is allowed to vary in a family of rational functions of fixed degree. A theorem of Morton \cite{morton_period 3}*{Theorem 8} provides an early example of a successful resolution of this type of question: for quadratic polynomials of the form $\phi(x)=x^2+a\in\Q[x]$ such that $-7-4a$ is not a square, Morton shows that the groups $G_{3,\phi}$ form a finite collection of known groups (up to isomorphism). A similar result for the groups $G_{4,\phi}$ of arbitrary quadratic polynomials $\phi$ is proved in \cite{krumm_fourth_dynatomic}. The main goal of the present paper is to classify the groups $G_{3,\phi}$ and $G_{4,\phi}$ for maps in a family of non-polynomial rational functions of degree 2. In order to define the maps of interest, we recall two concepts:
\begin{itemize}
\item A \textbf{critical point} of $\phi$ is a point $P\in\PP^1(\bar k)$ where the morphism $\phi:\PP^1\to\PP^1$ ramifies; equivalently, we have $\phi'(P)=0$ after applying a change of variables to move $P$ and $\phi(P)$ away from infinity.
\item For any field extension $K/k$, two rational functions $\phi, \psi\in k(x)$ are called \textbf{linearly conjugate} over $K$ if there exists $\sigma\in K(x)$ of degree 1 such that $\psi=\sigma^{-1}\circ\phi\circ\sigma$. In that case, $\phi$ and $\psi$ are considered to be equivalent as dynamical systems on $\PP^1(K)$.
\end{itemize}

As is well known to dynamicists, the collection of quadratic polynomial maps can be described, up to linear conjugacy over $\bar k$, as the collection of quadratic rational maps having a 1-periodic (i.e., fixed) critical point. Moreover, these maps form a one-parameter family represented by polynomials of the form $x^2+c$. In this paper we study the quadratic rational maps having a 2-periodic critical point, a collection of maps that is also known to form a one-parameter family up to conjugacy over $\bar k$ (see \cite{milnor}*{\textsection 3}).

\subsection{Main results} In stating our results we distinguish between rational maps according to their groups of symmetries. By definition, the \textbf{automorphism group} of a rational function $\phi\in k(x)$, denoted $\Aut(\phi)$, consists of all rational functions $\sigma\in\bar k(x)$ of degree 1 satisfying $\sigma^{-1}\circ\phi\circ\sigma=\phi$. 

The following theorems contain our results for the third dynatomic groups; analogous statements for the fourth dynatomic groups are proved in  \textsection\ref{Phi4_section}.

\begin{thm}\label{period2_main_thm_no_autos}
Let $\phi\in\Q(x)$ be a rational function of degree $2$ having a $2$-periodic critical point, and suppose that $\Aut(\phi)$ is trivial. Then $\phi$ is linearly conjugate over $\Q$ to a map of the form
\begin{equation}\label{no_auto_maps}
\phi_v:=\frac{v(x-1)}{x^2},\quad v\in\Q\setminus\{0\}.
\end{equation}
Moreover, with notation as in Section \ref{trivial_section_G3}, we have the following for $v\ne 3$: if $v$ is in the image of $\eta$, then $G_{3,\phi}\cong C$. Otherwise, $G_{3,\phi}\cong(\Z/3\Z)\wr S_2$.
\end{thm}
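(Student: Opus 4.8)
The plan is to prove the two assertions in turn: first the reduction of $\phi$ to the normal form $\phi_v$, then the determination of $G_{3,\phi}$.

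For the normalization I would begin with an arbitrary $\phi\in\Q(x)$ of degree $2$ having a $2$-periodic critical point $c_0$, with critical orbit $\{c_0,c_1\}$ where $\phi(c_0)=c_1$ and $\phi(c_1)=c_0$. The two critical points of $\phi$ are the roots of the Wronskian $p'q-pq'$, a binary quadratic form with coefficients in $\Q$. Since every element of $\Gal(\bar{\Q}/\Q)$ commutes with $\phi$ and therefore preserves the dynamical type of each critical point, and since (in the nondegenerate situation guaranteed by $\Aut(\phi)=1$) only one of the two critical points is $2$-periodic, the point $c_0$ must be $\Q$-rational, and hence so is $c_1=\phi(c_0)$. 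Conjugating by a degree-$1$ map in $\Q(x)$ carrying $c_0\mapsto 0$ and $c_1\mapsto\infty$ forces $\phi$ into the shape $\tfrac{ax+b}{x^2}$ with $a,b\in\Q$: the double pole at $0$ records that $0$ is a critical point mapping to $\infty$, while $\phi(\infty)=0$ forces the numerator to have degree at most $1$. A final scaling $x\mapsto\lambda x$ with $\lambda=-b/a\in\Q$ produces $\phi_v$ with $v=a^3/b^2\in\Q\setminus\{0\}$. Here the hypothesis $\Aut(\phi)=1$ is exactly what excludes the degenerate configurations (such as $\phi(x)=b/x^2$, for which $a=0$ and $\lambda$ is undefined) in which both critical points are periodic and $\phi$ admits a nontrivial automorphism.

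For the Galois group I would work first over the function field $\Q(v)$ and then specialize. Because $\phi_v$ has degree $2$, the map $\phi_v^3$ has degree $8$; subtracting the three fixed points of $\phi_v$ from the nine fixed points of $\phi_v^3$ leaves six points of exact period $3$, forming two orbits of length $3$. Thus $r=2$ and $G_{3,\phi_v}$ embeds in $(\Z/3\Z)\wr S_2$. Concretely I would compute the third dynatomic polynomial $\Phi_3(x;v)$, the degree-$6$ factor of the numerator of $\phi_v^3(x)-x$, and claim that the generic group $\Gal(\Phi_3(x;v)/\Q(v))$ is the full wreath product of order $18$. Since any Galois automorphism commutes with $\phi_v$, on each period-$3$ orbit it acts as a power of $\phi_v$, i.e.\ cyclically; this supplies the two $\Z/3\Z$ factors, and the only remaining freedom is whether the two orbits are interchanged, which is the $S_2$ quotient. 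I would establish fullness by showing that the quadratic resolvent distinguishing the two orbits is irreducible over $\Q(v)$ (realizing $S_2$) and that the cubic datum attached to a single orbit is irreducible (realizing each $\Z/3\Z$).

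Finally, to pass from the generic group to a fixed $v\ne 3$, I would use the explicit description of the degeneration locus. The $S_2$ quotient becomes trivial precisely when the quadratic subfield cut out by the orbit-swapping resolvent collapses, that is, when a discriminant-type quantity $D(v)$ is a square in $\Q$; the set of such $v$ is exactly the set of $v$-coordinates of rational points on the genus-$0$ curve $w^2=D(v)$, which—with the notation of Section \ref{trivial_section_G3}—is the image of the parametrizing map $\eta$. For these $v$ the group descends to the index-$2$ subgroup $C$, the base group $(\Z/3\Z)^2$ of the wreath product, with both $\Z/3\Z$ factors remaining nontrivial and independent, while for all other $v\ne 3$ the resolvent stays irreducible and the group is the full $(\Z/3\Z)\wr S_2$; the value $v=3$ is excluded as the degenerate parameter at which the period-$3$ orbit structure breaks down. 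I expect the main obstacle to be this last step: proving that the degeneration locus is exactly the image of $\eta$ and no larger, which requires an explicit specialization analysis (sharper than a soft Hilbert-irreducibility argument), together with verifying that $v=3$ is the sole additional exceptional value and that $C$ never degenerates further.
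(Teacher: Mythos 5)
Your normalization argument is correct in substance and takes a genuinely different, more elementary route than the paper. The paper's Lemma \ref{C2param} goes through Milnor's isomorphism $\calM_2\cong\A^2$, the Manes--Yasufuku normal form $\psi_{r,s}$, and Silverman's field-of-moduli result (\cite{silverman}*{Proposition 4.73}) to obtain conjugacy over $\Q$, then conjugates $\psi_{r,-2r}$ explicitly to $\phi_v$. You instead observe that the $2$-periodic critical point is unique, hence Galois-fixed, hence $\Q$-rational, and move the $2$-cycle to $\{0,\infty\}$ by a M\"obius map over $\Q$. This works, but the uniqueness claim deserves a proof rather than an assertion: since $\Phi_{2,\phi}$ has degree $2$, a quadratic map has at most one $2$-cycle, so if both critical points were $2$-periodic they would form that single cycle, forcing (after your normalization) $\phi=b/x^2$, which satisfies $\sigma^{-1}\circ\phi\circ\sigma=\phi$ for $\sigma(x)=c/x$ with $c^3=b^2\in\overline{\Q}$ and so has nontrivial $\Aut(\phi)$. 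With that sublemma your degenerate case $a=0$ is excluded by the same computation, and the scaling $\lambda=-b/a$ giving $v=a^3/b^2$ is correct. Note there is a slight circularity in your write-up: you need uniqueness of the $2$-periodic critical point \emph{before} reaching the normal form, not as a feature read off from it.

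The Galois-theoretic half, however, contains a genuine error in the degeneration analysis. You identify $C$ with the base group $(\Z/3\Z)^2$ of the wreath product and claim the group drops precisely when an orbit-swapping quadratic resolvent $D(v)$ becomes a square. In the paper's notation, the index-$2$ base group is $A$, and Lemma \ref{G3curves}\eqref{G3crva} shows the associated curve $y^2=\disc q_A=-12t^2$ has the single rational point $(0,0)$: the quadratic subfield \emph{never} collapses for $v\ne 0$, so your proposed degeneration locus is empty. The group that actually occurs for $v\in\Im\eta$ is $C\cong S_3$, non-cyclic of order $6$ and index $3$, containing the block-swapping involution; there the base part collapses to the twisted diagonal $\langle(1,2,3)(4,6,5)\rangle$ while the swap survives --- the opposite of what you describe. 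Correspondingly, $\eta$ parametrizes the genus-$0$ curve $Y_C$ cut out by the \emph{cubic} resolvent $q_C$, not a conic $w^2=D(v)$.

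Two further pieces are missing and are not cosmetic. First, your plan never analyzes the index-$3$ maximal subgroups $B$ and $C$ at all, yet both are transitive on the six roots and surject onto the $S_2$ quotient, so your fullness criteria (quadratic resolvent nonsquare, cubic data irreducible) cannot distinguish $W$ from them; ruling out $B$ requires the paper's genus-$1$ computation ($Y_B$ birational to the rank-$0$ elliptic curve 27a1, giving $\pi_B(\Q)=\{0,3\}$), and this --- not any breakdown of the period-$3$ orbit structure --- is the reason $v=3$ is excluded (indeed $\disc\Phi_3=-27t^{10}(t^2-9t+27)^2$ does not vanish at $t=3$). Second, showing that the group on $\Im\eta$ is exactly $C$ and not a proper subgroup uses the relations $H\le B$ and $J\le A$ for the maximal subgroups $H,J$ of $C$, pushing any further drop back to the curves $Y_A,Y_B$ already controlled; your proposal has no substitute for this step. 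As written, the second half of your argument would fail: it predicts no degeneration away from a square-discriminant locus that is in fact empty, while the true degeneration to $C\cong S_3$ happens for infinitely many $v$.
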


Note, in particular, that Theorem \ref{period2_main_thm_no_autos} explicitly identifies all groups that can be realized as dynatomic groups $G_{3,\phi}$ for maps $\phi$ as in the theorem.

\begin{thm}\label{period2_main_thm_autos}
Let $\phi\in\Q(x)$ be a rational function of degree $2$ having a $2$-periodic critical point, and suppose that $\Aut(\phi)$ is nontrivial. Then $\phi$ is linearly conjugate over $\Q$ to a map of the form
\begin{equation}\label{dun_maps}
\psi_v:=\frac{2x-1}{vx^2-1},\quad v\in\Q\setminus\{0,4\}.
\end{equation}
Moreover, with notation as in Section \ref{autos_section_G3}, we have the following for $v\ne 1$:
\begin{enumerate}[(a)]
\item If $v\notin\Im\alpha\cup\Im\beta$, then $G_{3,\phi}\cong W$;
\item If $v\in\Im\alpha$, then $G_{3,\phi}\cong A$, $K$, or $M$;
\item If $v\in\Im\beta$, then $G_{3,\phi}\cong B$ or $M$;
\item If $v\in\Im\mu$, then $G_{3,\phi}\cong M$;
\item If $v\in\Im\kappa$, then $G_{3,\phi}\cong K$.
\end{enumerate}
\end{thm}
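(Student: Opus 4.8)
The plan is to prove the theorem in two stages: first establish the normal form $\psi_v$, then determine $G_{3,\psi_v}$ as a function of $v$. For the normal form, I would start from the known description (over $\bar\Q$) of the quadratic maps with a $2$-periodic critical point as a one-parameter family and use the nontriviality of $\Aut(\phi)$ to rigidify the conjugation and carry out the descent to $\Q$. A degree-$2$ map has exactly two critical points, and the nontrivial involution $\sigma\in\Aut(\phi)\subset\PGL_2$ commutes with $\phi$, hence permutes these critical points and preserves $\Per_2(\phi)$. First I would show that $\sigma$ interchanges the two critical points (if it fixed the $2$-periodic one it would fix its entire orbit, forcing symmetry incompatible with $\deg\phi=2$), so that both critical points are $2$-periodic and their orbits are exchanged by $\sigma$. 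Since the Galois action on $\bar\Q$ also preserves critical points and periods, the critical orbit and the fixed locus of $\sigma$ are Galois-stable; placing these at standard rational position and normalizing $\sigma$ to a standard involution pins $\phi$ down to $\psi_v$ with the conjugating transformation defined over $\Q$, the residual freedom being absorbed into the parameter $v$.

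For the dynatomic group, I would compute $\Phi_{3,\psi_v}(x)\in\Q(v)[x]$ explicitly. Since $\psi_v$ has three fixed points and $2^3+1=9$ points of period dividing $3$, there are six $3$-periodic points forming two orbits, so $\Phi_3$ has degree $6$ and $G_{3,\psi_v}$ embeds into $(\Z/3\Z)\wr S_2$ with $r=2$, a group of order $18$. The involution $\sigma$ acts on these six points compatibly with the orbit structure, and this extra symmetry is precisely what cuts the generic group down from the full wreath product to $W$. I would identify $W$ as the Galois group of $\Phi_{3,\psi_v}$ over $\Q(v)$ by a direct computation --- factoring $\Phi_3$ and its relevant resolvents over $\Q(v)$, or specializing at one well-chosen value of $v$ and invoking the fact that a specialized group is a subgroup of the generic one together with a matching lower bound. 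By Hilbert irreducibility, $G_{3,\psi_v}\cong W$ for all $v$ outside a thin set, so the entire content of (a)--(e) lies in determining that exceptional set and the corresponding subgroups.

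To carry out the stratification I would match each subgroup $A,B,K,M\le W$ to an explicit arithmetic condition on $v$. The quadratic subfields of the splitting field correspond to square classes of certain discriminant-type elements $d_\ast(v)\in\Q(v)$, while the subfields cut out by the orbit-permuting and cyclic parts correspond to rational roots of appropriate resolvent polynomials $R_\ast(v,y)$. Each condition ``$d_\ast(v)$ is a square in $\Q$'' or ``$R_\ast(v,y)$ has a root in $\Q$'' defines a genus-$0$ curve, whose rational parametrization is exactly one of the maps $\alpha,\beta,\mu,\kappa$; thus $v\in\Im\alpha$ (resp. $v\in\Im\beta$) captures the first degeneration of the group, and $\mu,\kappa$ parametrize the further, nested degenerations lying inside $\Im\alpha$ and $\Im\beta$.

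The main obstacle is this nested structure. I expect the hard part to be showing that $\Im\mu$ and $\Im\kappa$ account for \emph{all} ways the group can drop below $A$ (respectively below $B$) --- that is, ruling out spurious additional coincidences among the conditions $d_\ast(v)=\square$ and $R_\ast(v,y)=0$ --- and dually that on $\Im\alpha\setminus(\Im\mu\cup\Im\kappa)$ the group is exactly $A$ rather than some smaller subgroup. This will require a complete understanding of the subgroup lattice of $W$, the containments among the special loci, and a resultant argument showing that the parametrizing curves meet only where the classification predicts. Establishing these containments and non-containments, rather than the initial computation of $\Phi_3$ or of $W$, is where I expect the genuine difficulty to reside.
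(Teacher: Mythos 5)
Your overall architecture does match the paper's: a $\Q$-rational normal form, then the generic Galois group of $\Phi_3$ over $\Q(t)$, Hilbert irreducibility, and a stratification of the exceptional set by rational points on curves attached to subgroups (this is exactly the machinery of Proposition \ref{specialization_curves}). But there is a genuine gap at the heart of your stratification: you assume every degeneration locus is a genus-$0$ curve whose parametrization is one of $\alpha,\beta,\mu,\kappa$, and consequently you miss the third maximal subgroup of $W$ entirely. Up to conjugacy $W$ has maximal subgroups $A$ (order $9$), $B$ (cyclic of order $6$), \emph{and} a non-cyclic $C$ of order $6$, and the curve $Y_C$ is not rational: in the paper it is geometrically reducible, splitting into three conjugate components over the cubic field generated by a root of $x^3-3x+1$, so that its rational points are confined to the $0$-dimensional locus where the conjugate factors meet, yielding $\pi_C(\Q)=\{0,1,4\}$ (Lemma \ref{G3curves_auto}\eqref{G3crvc}). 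This finiteness is the linchpin of all five parts of the theorem: for (a) it rules out $D_P\le C$; for (d) and (e) it forces $D_P=M$ or $D_P=K$ exactly, since $M$ and $K$ have order $3$ and their only proper subgroup, the trivial group, is contained in $C$; and for (b) and (c) it eliminates the remaining maximal subgroups $J$ of $A$ and $H$ of $B$, because $J\le C$ and $H\le C$. Without determining $\pi_C(\Q)$ your argument cannot close at any level of the lattice. Relatedly, you have mislocated the difficulty: the "completeness" you plan to prove --- that $\Im\mu$ and $\Im\kappa$ account for \emph{all} drops below $A$ and $B$ --- is neither claimed nor proved in the paper (parts (b),(c) are deliberately lists of possibilities, and only $\pi_M(\Q)\supseteq\Im\mu$ is established); the hard content is the rational-point computation on $Y_C$, not resultant arguments among genus-$0$ parametrizations.

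Two secondary problems. First, your heuristic that the involution "cuts the generic group down from the full wreath product to $W$" is false here: the paper computes $G\equiv W=(\Z/3\Z)\wr S_2$, the \emph{full} wreath product of order $18$; the symmetry constrains the generic group only for $n=4$, where one gets $(\Z/4\Z)\times S_3$ inside $(\Z/4\Z)\wr S_3$. This is harmless if you compute, but it signals a wrong expectation. Second, your normal-form descent does not go through as sketched: "placing the critical orbit and the fixed locus of $\sigma$ at standard rational position" is impossible over $\Q$ in general, because these are Galois-stable \emph{pairs} rather than rational points --- for $\psi_v$ the critical points satisfy $vx^2-vx+1=0$ and are typically quadratic conjugates, so no element of $\PGL_2(\Q)$ moves them to prescribed rational positions. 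The paper avoids this by first showing $C_2(\Q)\cap\calS(\Q)$ consists of the single class $[1/x^2]$, so that $\Lambda_1(\phi)=\{-2,-2,-2\}$, then normalizing with a rational point of generic orbit ($\phi(\infty)=0$, $\phi(0)=1$) and extracting $a=-2b$, $c=0$ from a divisibility computation among the fixed-point and multiplier polynomials. Your rigidification idea could likely be repaired along these lines, but as written the descent step fails.
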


Several arithmetic-dynamical properties of a rational map can be readily determined once the dynatomic groups of the map are well understood. The following theorem illustrates this fact by applying Theorem \ref{period2_main_thm_no_autos} to obtain information about the degrees of number fields generated by periodic points, as well as statistical data concerning periodic points over $p$-adic fields. We state our results only in the case of $3$-periodic points, but similar statements for 4-periodic points are proved in Section \ref{applications_section}.

\begin{thm}\label{intro_deg_dens_cor}
Let $\phi\in\Q(x)$ be a rational function of degree $2$ having a $2$-periodic critical point, and suppose that $\Aut(\phi)$ is trivial.
\begin{enumerate}[(a)]
\item\label{intro_deg_thm} The field of definition of every $3$-periodic point of $\phi$ is a number field of absolute degree $6$.
\item\label{intro_density_thm} For more than $72\%$ of primes $p$ (in the sense of Dirichlet density), $\phi$ does not have a $3$-periodic point in $\PP^1(\Q_p)$.
\end{enumerate}
\end{thm}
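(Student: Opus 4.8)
The plan is to push both assertions onto the third dynatomic polynomial of a normalized representative of $\phi$ and then read off the answer from the group computation in Theorem \ref{period2_main_thm_no_autos}. By that theorem $\phi$ is linearly conjugate over $\Q$ to a map $\phi_v = v(x-1)/x^2$. A degree-one conjugacy defined over $\Q$ is a $\Q$-isomorphism of dynamical systems, so it preserves the dynatomic field $\Q_{3,\phi}$ (hence the group $G_{3,\phi}$) and carries $\PP^1(\Q_p)$ to itself for every prime $p$; I may therefore assume $\phi=\phi_v$. A short computation shows that the critical $2$-cycle of $\phi_v$ is $\{0,\infty\}$, so $\infty$ is not $3$-periodic and the six points of exact period $3$ are precisely the roots of a degree-$6$ polynomial $\Phi\in\Q[x]$, the affine third dynatomic polynomial of $\phi_v$. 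By Theorem \ref{period2_main_thm_no_autos} the group $G_{3,\phi}$, which acts faithfully on these six roots, is isomorphic either to the full wreath product $(\Z/3\Z)\wr S_2$ or to the group $C$ of Section \ref{trivial_section_G3}.

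For part (a), note that $[\Q(P):\Q]$ equals the length of the $G_{3,\phi}$-orbit of $P$, so it suffices to show that $G_{3,\phi}$ acts transitively on the six roots, equivalently that $\Phi$ is irreducible over $\Q$. The wreath product is transitive in its natural imprimitive action on the two blocks of size $3$, and $C$, by its description in Section \ref{trivial_section_G3}, is a transitive subgroup (of order $6$, acting regularly). In either case $\Phi$ is irreducible and every $3$-periodic point has field of definition of absolute degree exactly $6$.

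For part (b) I would use the standard dictionary between $p$-adic points and Frobenius. For all but the finitely many primes of bad reduction or dividing $\disc\Phi$, the map $\phi_v$ has a $3$-periodic point in $\PP^1(\Q_p)$ if and only if $\Phi$ acquires a root in $\Q_p$; by Hensel's lemma this happens exactly when $\Phi$ has a root modulo $p$, i.e. when the Frobenius at $p$, acting on the six roots, fixes at least one of them. By the Chebotarev density theorem the Dirichlet density of primes admitting a $3$-periodic $\Q_p$-point equals the proportion of elements of $G_{3,\phi}$ having a fixed point, so the density of primes with no such point equals the proportion of fixed-point-free (derangement) elements.

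It then remains to count derangements in each possible group and to verify that the fraction exceeds $72\%$. In $(\Z/3\Z)\wr S_2$ the nine elements mapping to the transposition in $S_2$ interchange the two blocks and so have no fixed point, while among the nine elements of the base group $(\Z/3\Z)^2$ exactly the four that translate nontrivially in both blocks are fixed-point-free; the derangement fraction is thus $13/18\approx 0.7222$. In the regular group $C$ only the identity fixes a point, giving the larger fraction $5/6$. Since $13/18>0.72$, in both cases more than $72\%$ of primes admit no $3$-periodic point in $\PP^1(\Q_p)$. I expect the main subtlety to lie in part (b): one must treat the point at infinity and the finitely many bad primes carefully, and, because the stated threshold is sharp, being exactly the wreath-product value $13/18$, the group-theoretic count must be carried out precisely rather than merely estimated.
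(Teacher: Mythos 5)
Your route is essentially the paper's: reduce to $\phi_v$ via a $\Q$-conjugacy (Lemmas \ref{C2param} and \ref{Galois_iso_lem}), identify the $3$-periodic points with the roots of the degree-$6$ polynomial $\Phi_{3,v}$ (using that $\infty$ lies on the critical $2$-cycle and that $\disc\Phi_{3,v}\ne 0$), deduce (a) from transitivity of the possible Galois groups, and deduce (b) from a Chebotarev fixed-point count. Your derangement computation is exactly the paper's Lemma \ref{density_lem} applied to $F=\Phi_{3,v}$, and your fractions $13/18$ (for $W=(\Z/3\Z)\wr S_2$) and $5/6$ (for the regular group $C$) agree with the paper's complementary densities $5/18$ and $1/6$. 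One small simplification you could make in (b): since $\disc\Phi_{3,v}\ne 0$, the equivalence ``$\phi_v$ has a $3$-periodic point in $\PP^1(\Q_p)$ iff $\Phi_{3,v}$ has a root in $\Q_p$'' holds for \emph{every} prime $p$, though your version (all but finitely many $p$) costs nothing for a density statement.

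There is, however, one genuine gap: the parameter $v=3$. Theorem \ref{period2_main_thm_no_autos} classifies $G_{3,v}$ only for $v\ne 3$; that value is excluded because $3\in\pi_B(\Q)$ (the points $(3,0)$ and $(3,-18)$ lie on $Y_B$) and because $3$ is a rational root of the discriminants of the polynomials $q_A,q_B,q_C$, so Proposition \ref{specialization_curves}(2) is unavailable there. Your assertion that ``by Theorem \ref{period2_main_thm_no_autos} the group is $W$ or $C$'' is therefore unjustified for maps $\Q$-conjugate to $\phi_3$, and as written your argument establishes neither the transitivity in (a) nor the derangement bound in (b) for that conjugacy class. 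The paper closes this by directly computing the specialized group $G_{3,3}$ (legitimate since $\disc\Phi_3=-27t^{10}(t^2-9t+27)^2$ is nonzero at $t=3$, so Lemma \ref{dynatomic_splitting_field} still applies) and carrying it as a third possibility through both proofs: in Proposition \ref{G34period_deg} all three groups $W$, $C$, $G_{3,3}$ are verified to be transitive subgroups of $S_6$, and in Proposition \ref{G34_density_prop} the three groups yield only the two values $1/6$ and $5/18$ for $\delta(S_F)$, so the complement still has density at least $13/18>72\%$. Adding this single extra computation makes your proof complete; without it, the statement is unproved for one $\Q$-conjugacy class of maps satisfying the hypotheses.
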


The above theorem and its period 4 counterpart provide new data relevant to one of the guiding problems in arithmetic dynamics, namely the uniform boundedness conjecture of Morton and Silverman \cite{morton-silverman}. For every nonconstant rational function $\phi\in\Q(x)$, let $\Per_n(\phi,\Q)$ denote the set of $n$-periodic points of $\phi$ that are defined over $\Q$. The Morton--Silverman conjecture would imply, in particular, the existence of a constant $B$ such that
\[\text{For all quadratic maps $\phi\in\Q(x)$, }\Per_n(\phi,\Q)=\emptyset \text{ if }n>  B.\] 
This weaker statement has not been proved, although significant progress has been made on studying rational periodic points of quadratic maps. Notably, Poonen \cite{poonen} provides evidence suggesting that $B=3$ suffices if $\phi$ is restricted to the family of quadratic \emph{polynomial} maps; see also \cite{flynn-poonen-schaefer,hutz-ingram,morton_period4,stoll}. Other families of quadratic maps are studied in \cite{canci-vishkautsan,lukas-manes-yap,manes,vishkautsan}. Our final result suggests that the bound $B=2$ suffices for maps as in Theorem \ref{intro_deg_dens_cor}. Noting that all maps of the form \eqref{no_auto_maps} have a 2-periodic critical point defined over $\Q$ (namely $0$), we deduce the following from an earlier result of Canci and Vishkautsan \cite{canci-vishkautsan}*{Theorem 1.3}.

\begin{cor}\label{intro_cycles_cor}For maps $\phi$ as in Theorem \ref{intro_deg_dens_cor}, we have
\[\Per_n(\phi,\Q)=\emptyset\;\text{ for } 3\le n\le6.\]
\end{cor}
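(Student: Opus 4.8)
The plan is to reduce to the explicit normal form $\phi_v$ and then quote the classification of Canci and Vishkautsan. The first observation I would record is that linear conjugacy over $\Q$ preserves rational periodic points of each period: if $\psi=\sigma^{-1}\circ\phi\circ\sigma$ with $\sigma\in\Q(x)$ of degree $1$, then $\sigma$ restricts to a bijection $\PP^1(\Q)\to\PP^1(\Q)$ that intertwines the two dynamical systems, so it carries $\Per_n(\psi,\Q)$ bijectively onto $\Per_n(\phi,\Q)$ for every $n$. In particular $\Per_n(\psi,\Q)=\emptyset$ if and only if $\Per_n(\phi,\Q)=\emptyset$. Consequently it suffices to prove the vanishing for one representative of each $\Q$-conjugacy class, and by Theorem \ref{period2_main_thm_no_autos} every $\phi$ as in the statement (degree $2$, a $2$-periodic critical point, trivial $\Aut$) is conjugate over $\Q$ to some $\phi_v=v(x-1)/x^2$.

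Next I would check that the normal form meets the hypotheses of the cited theorem. The map $\phi_v$ is a degree-$2$ rational function defined over $\Q$, since the numerator $v(x-1)$ and denominator $x^2$ are coprime of degrees $1$ and $2$ (using $v\ne 0$). The point $0\in\PP^1(\Q)$ lies in the $\Q$-rational $2$-cycle $0\mapsto\infty\mapsto 0$: one has $\phi_v(0)=\infty$ and $\phi_v(\infty)=0$. A short computation in the chart $1/\phi_v$, namely examining $g(x)=x^2/(v(x-1))$ near $x=0$, shows $g'(0)=0$, so $\phi_v$ ramifies at $0$; hence $0$ is a rational periodic critical point of period $2$, exactly as asserted in the remark preceding the corollary.

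Finally, I would apply \cite{canci-vishkautsan}*{Theorem 1.3} to $\phi_v$, which for a quadratic rational map over $\Q$ possessing a rational periodic critical point rules out rational periodic points in the indicated range, yielding $\Per_n(\phi_v,\Q)=\emptyset$ for $3\le n\le 6$. Transporting this conclusion back along the conjugacy via the first paragraph gives the corollary for all $\phi$ in the stated family.

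The main point requiring care is not computational but rather the correct matching of hypotheses: one must confirm that the situation covered by \cite{canci-vishkautsan}*{Theorem 1.3} is precisely that of a rational periodic critical point of period $2$, and that its conclusion covers the full window $3\le n\le 6$. The remaining obstacle is purely bookkeeping, namely verifying that $\Q$-conjugacy leaves the sets $\Per_n(\cdot,\Q)$ unchanged and that the explicit $2$-cycle through $0$ is genuinely $\Q$-rational; once these are in place the result is immediate.
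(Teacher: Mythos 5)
Your proposal is correct and takes essentially the same route as the paper, which likewise obtains the corollary by noting that every such $\phi$ is $\Q$-conjugate to some $\phi_v$ of the form \eqref{no_auto_maps}, that $0$ is a $2$-periodic critical point of $\phi_v$ defined over $\Q$ (via the rational cycle $0\mapsto\infty\mapsto 0$), and then invoking \cite{canci-vishkautsan}*{Theorem 1.3}. Your explicit checks --- that $\Q$-linear conjugacy carries $\Per_n(\psi,\Q)$ bijectively onto $\Per_n(\phi,\Q)$, and that $\phi_v$ ramifies at $0$ --- simply make precise the steps the paper leaves implicit.
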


\subsection{Methods} The core ingredient in the proofs of our main results is a technique for studying the Galois groups of one-variable specializations of a two-variable polynomial. Due to the details of this method (discussed in Section \ref{spec_section}), its applicability to the analysis of dynatomic groups is limited to one-parameter families of maps, such as the collections $\{\phi_v\}$ and $\{\psi_v\}$ of Theorems \ref{period2_main_thm_no_autos} and \ref{period2_main_thm_autos}. We end this introduction by highlighting other one-parameter families to which this method can in principle be applied.

As explained more precisely in Section \ref{M2_section}, work of Milnor \cite{milnor} shows that the set of all quadratic dynamical systems on $\PP^1$ is in bijection with the affine plane $\A^2$. From this point of view, quadratic polynomial maps form a line in the plane, and the same is true for maps having a 2-periodic critical point. Similarly, the collections of maps having a $3$-periodic or $4$-periodic critical point correspond to rational plane curves for which a parametrization can be easily computed. Hence, the analysis of dynatomic groups for maps in these families lies within the scope of the methods developed here. We leave the details of this analysis for the interested reader to explore.

\subsection{Outline} This paper is organized as follows. Section \ref{prelim_section} provides the necessary background material on arithmetic dynamics and discusses the technique of Galois specialization. The main results of the paper, namely Theorems \ref{period2_main_thm_no_autos} and \ref{period2_main_thm_autos} and their period 4 counterparts, are proved in Sections \ref{Phi3_section} and \ref{Phi4_section}. Theorem \ref{intro_deg_dens_cor} and analogous statements for period 4 are proved as applications in Section \ref{applications_section}.

\subsection{Computer code} All computations for this article were carried out using \textsc{Magma} \cite{magma} V2.26-12. The corresponding code is available in \cite{code}, with files named according to the relevant subsections of the article.

\section{Preliminaries}\label{prelim_section}

Throughout this section, we let $k$ be an arbitrary field of characteristic 0.

\subsection{Specialization of Galois groups}\label{spec_section} Let $t$ and $x$ be indeterminates, let $\theta\in k[t][x]$ be a nonconstant monic polynomial (in $x$), and let $G$ be the Galois group of $\theta$ over the function field $k(t)$. For every element $c\in k$ we may consider the specialized polynomial $\theta(c,x)\in k[x]$ and its Galois group, denoted $G_c$. The Hilbert Irreducibility Theorem \cite{serre_topics}*{Chapter 3} implies that $G_c\cong G$ for all $c$ outside a thin subset of $k$. The structure of $G_c$ for elements $c$ of the thin set can be studied by applying Proposition \ref{specialization_curves} below, which essentially yields a classification of the groups $G_c$ in terms of the sets of $k$-rational points on a collection of algebraic curves.

Recall the notion of an isomorphism of group actions: if $U$ is a group acting on a set $X$, and $V$ a group acting on a set $Y$, an isomorphism between the two group actions consists of a group isomorphism $f:U\to V$ and a bijection $s:X\to Y$ satisfying
\[s(g\cdot x)=f(g)\cdot s(x)\] for all $g\in U$ and $x\in X$. If such an isomorphism exists, we will write $U\equiv V$. Note, in particular, that if $U$ and $V$ are conjugate subgroups of the symmetric group $\Sym(X)$ (with its natural action on $X$), then $U\equiv V$.

If $F$ is a finite extension of the rational function field $k(t)$, by a \textbf{prime} of $F$ we mean a maximal ideal of the integral closure of $k[t]$ in $F$. For $c\in k$, we write $p_c$ to denote the prime $(t-c)$ of $k(t)$.

\begin{prop}\label{specialization_curves}
Let $\theta(t,x)\in k[t][x]$ be a monic polynomial (in $x$) with no repeated root, and let $\Delta$ be the set of roots of the discriminant of $\theta$. Let $E/k(t)$ be a splitting field for $\theta$ and $G=\Gal(E/k(t))$ the Galois group of $\theta$. Fix $c\in k\setminus\Delta$, and let $P$ be a prime of $E$ lying over $p_c$. Writing $D_P$ for the decomposition group of $P$ over $k(t)$, we have the following:
\begin{enumerate}
\item There is an isomorphism of group actions $G_c\equiv D_{P}$, where $G_c$ acts on the roots of $\theta(c,x)$ and $D_P$ acts on the roots of $\theta$.
\item Let $H$ be a subgroup of $G$ with fixed field $E^H$, and suppose that the extension $E^H/k(t)$ is generated by a root of the monic irreducible polynomial $q_H(t,x)\in k[t][x]$. The following equivalence holds if $c$ is not a root of the discriminant of $q_H$: 
\[(D_P\text{ is contained in a conjugate of  }H)\iff (q_H(c,x) \text{ has a root in } k).\]
\end{enumerate}
\end{prop}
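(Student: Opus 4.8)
The plan is to prove Proposition \ref{specialization_curves} using the standard theory of decomposition groups for extensions of Dedekind domains, applied to the integral closures of $k[t]$ in the relevant function fields.

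The plan is to prove both parts via reduction theory for Dedekind domains, working with the integral closure $B$ of $k[t]$ in $E$ and the reduction map modulo the prime $P$; part (2) will then be deduced formally from part (1) applied to $q_H$.

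For part (1), I would first observe that since $\theta$ is monic its roots $\alpha_1,\dots,\alpha_d$ lie in $B$, and since $c\notin\Delta$ the specialization has discriminant $\disc_x(\theta)(c)\ne 0$, so $\theta(c,x)$ is separable; thus the reductions $\bar\alpha_i=\alpha_i\bmod P$ are \emph{distinct} roots of $\theta(c,x)$ in the residue field $\kappa(P)=B/P$, and $L:=k(\bar\alpha_1,\dots,\bar\alpha_d)\subseteq\kappa(P)$ is a splitting field for $\theta(c,x)$ over $k=k[t]/(t-c)$, with $G_c=\Gal(L/k)$. Each $g\in D_P$ preserves $P$ and so induces $\bar g\in\Gal(\kappa(P)/k)$; since $g$ permutes the $\alpha_i$ and reduction is injective on them (distinctness of the $\bar\alpha_i$), the assignment $g\mapsto\bar g|_L$ is a group homomorphism $\rho:D_P\to G_c$ intertwining the action of $g$ on $\{\alpha_i\}$ with that of $\bar g$ on $\{\bar\alpha_i\}$ via $\alpha_i\mapsto\bar\alpha_i$. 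The crucial step is to show $\rho$ is an isomorphism: it is injective because a $g$ fixing every $\bar\alpha_i$ must fix every $\alpha_i$ (again by distinctness), hence $g=1$ on $E=k(t)(\alpha_1,\dots,\alpha_d)$; and a dimension count finishes the job. Writing $e,f$ for the common ramification and residue degrees of primes of $E$ over $p_c$, the identity $|D_P|=ef$ together with $L\subseteq\kappa(P)$ and injectivity of $\rho$ yields
\[[L:k]\le[\kappa(P):k]=f\le ef=|D_P|\le|G_c|=[L:k],\]
forcing equality throughout; in particular $e=1$, $\kappa(P)=L$, and $\rho$ is bijective. This is exactly the isomorphism of group actions $G_c\equiv D_P$.

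For part (2), I would apply part (1) to $q_H$ in place of $\theta$. Its splitting field over $k(t)$ is $E^N$, where $N=\bigcap_{g\in G}gHg^{-1}$ is the normal core of $H$, and $\Gal(E^N/k(t))=G/N$ acts faithfully on the roots of $q_H$, which I identify with the coset space $G/H$ (the kernel of the $G$-action on $G/H$ being precisely $N$). Setting $P'=P\cap E^N$, the decomposition group $D_{P'}\le G/N$ is the image of $D_P$ under $G\to G/N$, and since the $G$-action on $G/H$ factors through $G/N$, the subgroup $D_P$ has the same orbits on $G/H$ as $D_{P'}$. Part (1) applied to $q_H$ (legitimate because $q_H$ is monic and irreducible, hence separable, and $c$ avoids $\disc_x(q_H)$) gives an isomorphism of group actions between the Galois group of $q_H(c,x)$ acting on its roots and $D_{P'}$ acting on the roots of $q_H$. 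I would then use two elementary facts: a separable polynomial over $k$ has a root in $k$ exactly when its splitting-field Galois group fixes one of the roots; and a subgroup $S\le G$ has a fixed point on $G/H$ if and only if $S\subseteq gHg^{-1}$ for some $g$. Chaining these gives $q_H(c,x)$ has a root in $k\iff D_{P'}$ fixes a root of $q_H\iff D_P$ fixes a coset in $G/H\iff D_P$ lies in a conjugate of $H$, as desired.

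The main obstacle is the heart of part (1): showing that the reduction map $\rho$ is an isomorphism of group actions, which requires simultaneously controlling injectivity (via separability of $\theta(c,x)$) and the residue degree, so as to \emph{deduce} that $p_c$ is unramified in $E$ rather than assume it. Once part (1) is available, part (2) is essentially formal; the only care needed there is the compatibility of decomposition groups with passage to the quotient $G/N$, equivalently with restriction of $P$ to the Galois closure $E^N$ of $E^H$.
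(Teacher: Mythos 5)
Your proposal is correct, but it is worth noting that the paper does not actually prove this proposition: its entire ``proof'' is a citation to Propositions 2.3 and 2.6 of the Krumm--Sutherland paper on explicit Hilbert irreducibility. What you have written is a complete, self-contained derivation of both cited facts, and it is essentially the classical Dedekind-style specialization argument on which such results rest: reduce the roots $\alpha_1,\dots,\alpha_d\in B$ modulo $P$, use $c\notin\Delta$ (together with monicity, so that the discriminant specializes to the discriminant) to see the reductions are distinct, get injectivity of $\rho:D_P\to G_c$ from that distinctness, and then close the loop with the count $[L:k]\le f\le ef=|D_P|\le |G_c|=[L:k]$, which correctly \emph{deduces} $e=1$ and $\kappa(P)=L$ rather than assuming $p_c$ unramified --- the one genuinely delicate point, and you flag it explicitly. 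Your reduction of part (2) to part (1) is also sound: the identification of the roots of $q_H$ with $G/H$ uses exactly the hypothesis that $E^H$ is generated by a single root of $q_H$ (so that root has stabilizer $H$, and the kernel of the coset action is the core $N$), the compatibility $D_{P'}=\mathrm{image\ of\ }D_P$ under $G\to G/N$ is the standard behavior of decomposition groups in Galois subextensions, and the two ``elementary facts'' you chain together are correct, with separability of $q_H(c,x)$ (guaranteed by the hypothesis on $\disc q_H$) needed for the root-in-$k$ criterion. Two small points you rely on silently are both harmless here: the formula $|D_P|=ef$ requires separable residue extensions, and the separability of the irreducible $q_H$ itself, but both are automatic since the section fixes $\mathrm{char}\,k=0$; it would be worth one clause acknowledging this. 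In short: where the paper buys brevity by outsourcing to a reference, your argument buys transparency --- the reader sees exactly why the specialized Galois group is a decomposition group and why the curve criterion in part (2) is purely a statement about fixed points of $D_P$ on $G/H$.
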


\begin{proof}
This follows from Propositions 2.3 and 2.6 in \cite{krumm-sutherland}.
\end{proof}

\begin{rem} If the polynomial $\theta$ in Proposition \ref{specialization_curves} is not monic, the same statements hold with minor modifications. The condition $\disc\theta(c,x)\ne 0$ is strengthened to $\ell(c)\cdot\disc\theta(c,x)\ne 0$, where $\ell\in k[t]$ is the leading coefficient of $\theta$, and the condition $\disc q_H(c,x)\ne 0$ is replaced by $\ell(c)\cdot\disc q_H(c,x)\ne 0$.
\end{rem}

\begin{notation}\label{HIT_notation}
For every subgroup $H\subseteq G$, let us fix a choice of defining polynomial $q_H$ for the fixed field of $H$ (as in Proposition \ref{specialization_curves}). We denote by
\begin{itemize}
\item $Y_H$ the affine plane curve defined by $q_H(t,x)=0$; and
\item $\pi_H(k)$ the projection of the set $Y_H(k)$ onto the first coordinate.
\end{itemize}
\end{notation}

For an element $c\in k\setminus\Delta$ and a subgroup $H$ of $G$, Proposition \ref{specialization_curves} shows, loosely speaking, that $G_c$ is a subgroup of $H$ if and only if $c\in\pi_H(k)$; indeed, the latter condition is equivalent to the statement that $q_H(c,x)$ has a root in $k$. As a consequence, we will have $G_c=H$ if $c\in\pi_H(k)$ and $c\notin\pi_M(k)$ for every maximal subgroup $M$ of $H$. Thus, the sets of $k$-rational points $Y_H(k)$ and $Y_M(k)$ determine the specializations $G_c$ for which $G_c=H$. By allowing $H$ to vary among all subgroups of $G$, and determining the $k$-rational points on the relevant curves, we obtain a classification of the groups $G_c$ according to which subgroup of $G$ they are isomorphic to. This rough description of our method will be illustrated more precisely in subsequent sections.   

In the case $k=\Q$, much of the data needed to classify the groups $G_c$ can be explicitly computed: the Galois group $G$ can be obtained (as a permutation group) using \cite{krumm-sutherland}*{Algorithm 3.1}, and the polynomials $q_H$ can be found using \cite{krumm-sutherland}*{Algorithm 3.2}. Moreover, the (finitely many) groups $G_c$ with $c\in\Delta$ can be computed using available Galois group algorithms for polynomials over $\Q$; see \cite{fieker-kluners}. \textsc{Magma} includes implementations of all of the above algorithms. 

For curves $Y_H$ having finitely many rational points, one can attempt to determine the sets $Y_H(\Q)$ by applying several available methods for computing rational points on curves; see \cite{stoll_survey,poonen_survey} for a survey of current methods. If $Y_H(\Q)$ is infinite, a well-known theorem of Faltings implies that $Y_H$ is birational to either $\PP^1$ or an elliptic curve with positive Mordell--Weil rank. In the former case, a parametrization of $Y_H$ can be computed using the methods of \cite{sendra-winkler-diaz}*{Chapters 4-6}, while in the latter case $Y_H(\Q)$ can be described by providing a birational map to an elliptic curve in Weierstrass form and, if possible, a list of generators for the Mordell--Weil group of the curve.

\subsection{The moduli space $\calM_d$}\label{Md_section}
Let $d$ be a positive integer. For any intermediate field $L$ in the extension $\bar k/k$, let $\Rat_d(L)$ denote the set of rational functions in $L(x)$ of degree $d$. The set $\Rat_1(L)$ forms a group under composition, and as such it is isomorphic to $\PGL_2(L)$. The latter group may therefore be regarded as acting on $\Rat_d(L)$ by conjugation; for $\phi\in\Rat_d(L)$ and $\sigma\in\PGL_2(L)$, we write $\phi^{\sigma}:=\sigma^{-1}\circ\phi\circ\sigma$. 

For any map $\phi\in\Rat_d(\bar k)$, let $[\phi]=\{\phi^{\sigma}:\sigma\in\PGL_2(\bar k)\}$. Following Silverman \cite{silverman_fod}, we call $[\phi]$ the \textbf{dynamical system} of $\phi$. Thus, elements of the orbit space
\[\Rat_d(\bar k)/\PGL_2(\bar k)=\{[\phi]:\phi\in\Rat_d(\bar k)\}\]
are dynamical systems of degree $d$.
 As shown by Silverman \cite{silverman_moduli}  (see also \cite{silverman}*{\textsection 4.4}), there exists an affine algebraic variety $\calM_d$, defined over $k$, with the property that there is a bijection
\[\calM_d(\bar k)\stackrel{\sim}{\longrightarrow}\Rat_d(\bar k)/\PGL_2(\bar k),\]
so that points on $\calM_d$ correspond to dynamical systems of degree $d$.

Multipliers at periodic points, defined below, are useful for analyzing the varieties $\calM_d$. If $P\ne\infty$ is an $n$-periodic point of the map $\phi\in\bar k(x)$, the \textbf{multiplier} of $\phi$ at $P$ is given by $\lambda_{\phi,P}:=(\phi^n)'(P)$. If $\infty\in\Per_n(\phi)$, we set
$\lambda_{\phi,\infty}:=\lambda_{\phi^{\sigma},P}$ for any $\sigma\in\PGL_2(\bar k)$ and $P\in\PP^1(\bar k)$ satisfying $\sigma(P)=\infty$ and $P\ne\infty$. (The fact that $\lambda_{\phi,\infty}$ is well defined follows from \cite{silverman}*{Proposition 1.9}.) With these definitions, we have the following basic property of multipliers:
\begin{equation}\label{multiplier_conjugate}
\lambda_{\phi,P}=\lambda_{\phi^{\sigma},\sigma^{-1}(P)}
\end{equation}

for $\sigma\in\PGL_2(\bar k)$ and $P\in\Per_n(\phi)$.

The $n$-\textbf{multiplier spectrum} of $\phi$ is the multiset
\[\Lambda_n(\phi)=\{\lambda_{\phi,P}:P\in\Per_n(\phi)\}.\]
It follows from \eqref{multiplier_conjugate} that linearly conjugate maps have equal multiplier spectra; hence, the $n$-multiplier spectrum is a well-defined invariant of the dynamical system $[\phi]\in\calM_d(\bar k)$. The following lemma shows that multiplier spectra can be used to detect the existence of periodic critical points.

\begin{lem}\label{mult0_lem}A dynamical system $[\phi]\in\calM_d(\bar k)$ has an $n$-periodic critical point if and only if $0\in\Lambda_n(\phi)$.
\end{lem}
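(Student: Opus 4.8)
The plan is to prove both directions by relating the multiplier at a periodic point $P$ to the derivative of $\phi$ at the points in the orbit of $P$. The key computational tool is the chain rule applied to the iterate $\phi^n$. The central observation is that $\lambda_{\phi,P}=0$ precisely when the forward orbit of $P$ contains a critical point, and that a point on an $n$-cycle is critical if and only if the whole cycle is ``critical'' in the sense that $0$ appears as a multiplier.

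\emph{Setting up the chain rule.} Suppose first that $[\phi]$ has an $n$-periodic critical point; since the multiplier spectrum is an invariant of the dynamical system (by \eqref{multiplier_conjugate}), I may replace $\phi$ by a conjugate and assume the critical point and its entire orbit avoid $\infty$, so that I can work with the affine derivative $\phi'$. Let $P_0$ be the $n$-periodic critical point and write its orbit as $P_0\mapsto P_1\mapsto\cdots\mapsto P_{n-1}\mapsto P_0$, where $P_i=\phi^i(P_0)$. By the chain rule,
\[
\lambda_{\phi,P_0}=(\phi^n)'(P_0)=\prod_{i=0}^{n-1}\phi'(P_i).
\]
Since $P_0$ is a critical point, one of the factors $\phi'(P_i)$ vanishes (namely the one corresponding to whichever orbit point is critical), so the product is $0$ and hence $0\in\Lambda_n(\phi)$.

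\emph{The reverse direction.} Conversely, suppose $0\in\Lambda_n(\phi)$, so there is an $n$-periodic point $P$ with $\lambda_{\phi,P}=0$. Again after conjugating I may assume the cycle of $P$ avoids $\infty$, and the same product formula gives $\prod_{i=0}^{n-1}\phi'(P_i)=0$. Therefore $\phi'(P_j)=0$ for some $j$, which means $P_j=\phi^j(P)$ is a critical point of $\phi$. Since $P_j$ lies on the same $n$-cycle as $P$, it is itself $n$-periodic, and thus $P_j$ is an $n$-periodic critical point. This shows $[\phi]$ has an $n$-periodic critical point.

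\emph{Main obstacle.} The one genuinely delicate point is the careful handling of the point at infinity, since the affine derivative $\phi'$ and the chain rule are only directly available once the relevant points are moved away from $\infty$. I would address this using the definition of the multiplier at $\infty$ via conjugation together with its conjugation-invariance \eqref{multiplier_conjugate}, and likewise use the definition of a critical point as a ramification point of the morphism $\phi:\PP^1\to\PP^1$ (which is coordinate-free) to ensure that the notion of ``critical point on the orbit'' is preserved under the conjugation. Apart from this bookkeeping, the argument is a direct application of the chain rule, so I expect no substantive difficulty beyond ensuring that a single cycle either contains a critical point or does not, uniformly across all its points.
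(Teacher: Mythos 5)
Your proof is correct and follows essentially the same route as the paper: conjugate so the relevant cycle avoids $\infty$, apply the chain rule to get $\lambda_{\phi,P}=\prod_{i=0}^{n-1}\phi'(\phi^i(P))$, and read off both directions from the vanishing of a factor. The only cosmetic difference is that the paper conjugates once so that $\infty\notin\Per_n(\phi)$, while you move each individual orbit away from $\infty$; both are justified by \eqref{multiplier_conjugate} and the coordinate-free notion of a critical point, exactly as you note.
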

\begin{proof}
Conjugating $\phi$ if necessary, we may assume that $\infty\notin\Per_n(\phi)$. Let $P\in\Per_n(\phi)$ and let $C$ denote the $\phi$-orbit of $P$, i.e., the set $\{\phi^i(P):i\ge 0\}$. Since $\infty\notin C$, the chain rule yields
\begin{equation}\label{mult_ident}
\lambda_{\phi,P}=(\phi^n)'(P)=\prod_{i=0}^{n-1}\phi'(\phi^i(P))=\prod_{\beta\in C}\phi'(\beta).
\end{equation}
Suppose that $P$ is a critical point of $\phi$. Then $\phi'(P)=0$, and \eqref{mult_ident} implies $\lambda_{\phi,P}=0$. Thus $0\in\Lambda_n(\phi)$. Conversely, if $\lambda_{\phi,P}=0$, then \eqref{mult_ident} implies that $\phi'(\beta)=0$ for some $\beta\in C$. Thus $\beta$ is an $n$-periodic critical point of $\phi$.
\end{proof}

\subsection{The moduli space $\calM_2$}\label{M2_section} By considering multipliers at fixed points, Milnor \cite{milnor} showed that the variety $\calM_2$ is isomorphic to the affine plane over $k$ (see also \cite{silverman}*{Theorem 4.56}). Explicitly, there is an isomorphism
\begin{equation}\label{M2A2_iso}
\calM_2\stackrel{\sim}{\longrightarrow}\A^2,\quad[\phi]\mapsto(\sigma_1,\sigma_2),
\end{equation}
 where $\sigma_i$ is the $i$th elementary symmetric function of the elements of $\Lambda_1(\phi)$. We may therefore regard points of $\A^2$ as corresponding to quadratic dynamical systems on $\PP^1$. Taking this point of view, Milnor notes that various subsets of $\calM_2$ that are defined by dynamical properties correspond to algebraic curves in the plane. In particular, the set of quadratic dynamical systems having an $n$-periodic critical point corresponds to a curve which we denote by $C_n$.\footnote{Milnor uses the notation $\Per_n(0)$ for our curve $C_n$, and defines this curve as the set of dynamical systems $[\phi]\in\calM_2(\bar k)$ having an $n$-periodic point with multiplier 0. This definition is equivalent to the one given here, by Lemma \ref{mult0_lem}.} In addition, the \textbf{symmetry locus} in $\calM_2$, i.e., the set of dynamical systems $[\phi]$ for which the group $\Aut(\phi)$ is nontrivial, corresponds to a curve which we denote by $\calS$. For proofs that $C_n$ and $\calS$ are one-dimensional, see Corollaries D.2 and 5.3 in Milnor's article.

Let us use $(r,s)$ as coordinates on $\A^2$. Milnor \cite{milnor}*{Corollary 5.3} obtains a parametrization of $\calS$ from which we deduce the following equation for $\calS$:
\begin{equation}\label{symlocuseq}
-2r^3-r^2s+r^2+8rs+4s^2-12r-12s+36=0.
\end{equation}
Furthermore, Milnor \cite{milnor}*{Appendix D} provides the equation $r=2$ for the curve $C_1$ (which corresponds to the collection of quadratic polynomial maps), and the equation $s=-2r$ for the curve $C_2$. Though we will not work here with the curves $C_3$ and $C_4$, we remark that both of these are rational plane curves, and Milnor's article includes an equation for $C_3$.

In what follows, we identify $\calM_2$ with $\A^2$ using the isomorphism \eqref{M2A2_iso}. The next two lemmas provide one-parameter descriptions of the $\Q$-linear conjugacy classes of dynamical systems in $C_2(\Q)$.  

\begin{lem}\label{C2param}
Suppose that $\phi\in\Rat_2(\Q)$ has a $2$-periodic critical point, and that $\Aut(\phi)$ is trivial. Then $\phi$ is linearly conjugate over $\Q$ to a map of the form $\phi_v$ defined in \eqref{no_auto_maps}.
\end{lem}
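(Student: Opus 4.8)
The plan is to work entirely on the moduli space $\calM_2\cong\A^2$ and reduce the statement to a concrete rational-parametrization computation. First I would invoke the explicit description of the relevant curves recorded above: by the footnote and the surrounding discussion, the dynamical systems having a $2$-periodic critical point form the curve $C_2$, cut out by $s=-2r$, while the symmetry locus $\calS$ (where $\Aut(\phi)$ is nontrivial) is the curve defined by \eqref{symlocuseq}. Since $\phi$ is assumed to have a $2$-periodic critical point and trivial automorphism group, the point $[\phi]\in\calM_2(\Q)$ lies on $C_2$ but off $\calS$; substituting $s=-2r$ into \eqref{symlocuseq} gives a univariate polynomial in $r$ whose finitely many roots are exactly the $r$-coordinates to excise. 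Thus $[\phi]$ corresponds to a rational point $(r,-2r)$ on $C_2$ with $r$ avoiding this finite bad set.

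Next I would exhibit the family $\{\phi_v\}$ as a parametrization of $C_2$. The idea is to compute the image of the map $v\mapsto[\phi_v]$ under the isomorphism \eqref{M2A2_iso}, i.e., to compute the first two elementary symmetric functions $\sigma_1,\sigma_2$ of the fixed-point multiplier spectrum $\Lambda_1(\phi_v)$. Concretely, one finds the fixed points of $\phi_v=v(x-1)/x^2$ by solving $\phi_v(x)=x$, computes the multipliers $\lambda=\phi_v'$ at each (handling the fixed point at $\infty$ via the stated convention for $\lambda_{\phi,\infty}$), and forms $\sigma_1,\sigma_2$ as rational functions of $v$. This yields an explicit rational map $\A^1\ratmap C_2$, $v\mapsto(r(v),s(v))$, which I expect to land on the line $s=-2r$ — confirming via Lemma \ref{mult0_lem} that each $\phi_v$ indeed has a $2$-periodic critical point (one checks directly that $0$ is such a point of $\phi_v$, since $\phi_v(0)=\infty$ and $0$ is critical). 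The crux is then to show this parametrization is \emph{surjective} onto $C_2(\Q)$ minus the symmetry points: since $C_2$ is a line, hence birational to $\PP^1$, it suffices to verify that $r(v)$ is a degree-one (or otherwise invertible over $\Q$) rational function of $v$, so that every admissible $r\in\Q$ is attained by a unique $v\in\Q\setminus\{0\}$.

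Having matched moduli points, the final step is to upgrade the equality $[\phi]=[\phi_v]$ in $\calM_2(\Q)$ to $\Q$-linear conjugacy. This is precisely the subtle point, because the bijection $\calM_2(\bar k)\cong\Rat_2(\bar k)/\PGL_2(\bar k)$ a priori only guarantees conjugacy over $\bar\Q$: two maps with the same image in $\calM_2(\Q)$ need not be conjugate by an element of $\PGL_2(\Q)$, as the field of moduli may fail to be a field of definition. The hard part will therefore be this descent: I would argue that for maps in this family the field of moduli is a field of definition, and that the conjugating transformation can be chosen over $\Q$. The leverage here is the hypothesis that $\Aut(\phi)$ is trivial, which makes the conjugating $\sigma$ between two $\bar\Q$-conjugate representatives unique; a standard cocycle/Galois-descent argument (or a direct computation pinning down $\sigma$ from the $\Q$-rational periodic-point structure — e.g. the $\Q$-rational $2$-cycle $0\mapsto\infty\mapsto 0$ of the critical orbit) then forces $\sigma\in\PGL_2(\Q)$. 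I would expect the cleanest route to be the explicit one: normalize $\phi$ so that its $2$-periodic critical orbit is sent to the $\{0,\infty\}$ orbit of $\phi_v$, carry out the normalization over $\Q$ since all the relevant marked points are $\Q$-rational, and read off $v$ from a remaining multiplier invariant.
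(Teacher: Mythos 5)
Your plan is correct, and it reaches the lemma by a somewhat different route than the paper. The paper also works in $\calM_2\cong\A^2$ and also uses trivial $\Aut(\phi)$ for the descent, but it produces a $\Q$-rational representative of the moduli point by citation rather than by parametrizing $C_2$ with the family itself: by \cite{manes-yasufuku}*{Lemma 3.1} there is a normal form $\psi_{r,s}\in\Q(x)$ with $[\phi]=[\psi_{r,s}]$ for \emph{any} $(r,s)\in\A^2(\Q)$, Silverman's Proposition 4.73 (exactly the cocycle/uniqueness argument you sketch, packaged as ``trivial automorphism group implies no nontrivial twists'') upgrades this to $\Q$-conjugacy, and then setting $s=-2r$ and conjugating by the explicit $\sigma(x)=(2-x)/(x-1)$ turns $\psi_{r,-2r}$ into $\phi_v$ with $v=r+6$. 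Your version instead verifies directly that $v\mapsto[\phi_v]$ parametrizes $C_2$; this is a genuine alternative and it works: the multiplier at a finite fixed point of $\phi_v$ simplifies (using $x^3=v(x-1)$) to $(2-x)/(x-1)$, and Newton-identity bookkeeping gives $\sigma_1=v-6$, so $r(v)=v-6$ is indeed degree one and hits every admissible $r$, the single omission $v=0\leftrightarrow r=-6$ being precisely the point $C_2\cap\calS$ (substituting $s=-2r$ into \eqref{symlocuseq} yields $(r+6)^2=0$, as you predict). Two points in your sketch deserve explicit proof rather than expectation. First, your framing of the descent as a field-of-moduli/field-of-definition issue is slightly off target: both $\phi$ and $\phi_v$ are already defined over $\Q$, so the issue is only whether the $\bar\Q$-conjugacy can be realized over $\Q$, and your uniqueness argument ($fg^{-1}\in\Aut(\phi)=\{1\}$, hence ${}^\tau\!f=f$ for all $\tau\in\Gal(\bar\Q/\Q)$) settles that; this is the content of the result the paper cites. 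Second, your explicit-normalization variant needs the $\Q$-rationality of the $2$-periodic critical point, which is not automatic from the hypotheses as stated: you must observe that a quadratic map has at most one $2$-cycle, so if both critical points were $2$-periodic they would form a single critical $2$-cycle, forcing $[\phi]=[1/x^2]\in\calS$, contradicting trivial $\Aut(\phi)$; hence exactly one critical point is $2$-periodic, it is Galois-stable and therefore $\Q$-rational, after which the normalization to $(ax+b)/x^2$ and a final scaling $x\mapsto -bx/a$ over $\Q$ produce $\phi_v$ with $v=a^3/b^2\ne0$. With these two points filled in, your argument is complete and self-contained, at the cost of the multiplier computation and the one-cycle counting argument; the paper's route is shorter because the normal form and the twisting statement are quoted from the literature.
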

\begin{proof}
 Let $(r,s)$ be the point of $\A^2$ corresponding to $[\phi]$ under the isomorphism \eqref{M2A2_iso}. By \cite{manes-yasufuku}*{Lemma 3.1}, we have $[\phi]=[\psi_{r,s}]$, where
\[\psi_{r,s}:=\frac{2x^2+(2-r)x+(2-r)}{-x^2+(2+r)x+2-r-s}\in\Q(x).\]
Moreover, the fact that $\Aut(\phi)$ is trivial implies that $\phi$ and $\psi_{r,s}$ are linearly conjugate over $\Q$ (see \cite{silverman}*{Proposition 4.73}).

Since $\phi$ has a 2-periodic critical point, the point $(r,s)$ lies on the curve $C_2$, so $s=-2r$. Thus, $\phi$ is linearly conjugate to the map $f:=\psi_{r,-2r}$. Letting $\sigma(x)=(2-x)/(x-1)$, we compute that $\sigma^{-1}\circ f\circ\sigma=(r+6)(x-1)/x^2=\phi_v$, where $v=r+6$. We must have $v\ne 0$ since otherwise $f$ would be constant.
\end{proof}

\begin{lem}\label{dunaiskylemma} 
Suppose that $\phi\in\Rat_2(\Q)$ has a $2$-periodic critical point, and that $\Aut(\phi)$ is nontrivial. Then $\phi$ is conjugate over $\Q$ to a map of the form $\psi_v$ defined in \eqref{dun_maps}.
\end{lem}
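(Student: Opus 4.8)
The plan is to combine the moduli-space picture of Section \ref{M2_section} with a twisting analysis. First I would locate the dynamical system $[\phi]$ in $\calM_2(\Q)\cong\A^2(\Q)$. Since $\phi$ has a $2$-periodic critical point, Lemma \ref{mult0_lem} places $[\phi]$ on the curve $C_2$, so its coordinates satisfy $s=-2r$; since $\Aut(\phi)$ is nontrivial, $[\phi]$ also lies on the symmetry locus \eqref{symlocuseq}. Substituting $s=-2r$ into \eqref{symlocuseq} collapses that equation to $(r+6)^2=0$, so the two curves meet only at $(r,s)=(-6,12)$. Thus every $\phi$ as in the hypothesis determines the \emph{same} point of $\calM_2(\Q)$, and in particular all such maps are linearly conjugate to one another over $\bar k$. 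A short computation (using $vx^3-3x+1=0$ at a fixed point, all three fixed-point multipliers of $\psi_v$ turn out to equal $-2$) confirms that each $\psi_v$ in \eqref{dun_maps} lies over $(-6,12)$ as well, so the content of the lemma is that the family $\{\psi_v\}$ exhausts the $\Q$-linear conjugacy classes lying over this one moduli point.

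Next I would fix a convenient $\bar k$-model and read off its symmetries. Normalizing so that the two critical points are $0$ and $\infty$, and using that they form the critical $2$-cycle $0\mapsto\infty\mapsto 0$ (so the map is totally ramified over each), one finds that $\phi$ is conjugate over $\bar k$ to a map of the shape $x\mapsto c/x^2$, whose automorphism group is the full $S_3$ generated by $x\mapsto\zeta_3 x$ and $x\mapsto 1/x$. In particular the field of moduli is \emph{not} automatically a field of definition here, and the several $\Q$-forms over $(-6,12)$ are classified by $H^1(\Gal(\bar k/\Q),\Aut(\phi))$ with $\Aut(\phi)\cong S_3$; this is exactly the feature that makes the present case genuinely harder than Lemma \ref{C2param}, where $\Aut(\phi)$ is trivial and the Manes--Yasufuku representative can be used verbatim. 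Indeed, the representative $\psi_{r,s}$ from the proof of Lemma \ref{C2param} degenerates to the constant $-2$ at $(r,s)=(-6,12)$, reflecting that this point lies on $\calS$.

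Finally I would realize every $\Q$-form explicitly by descent. The critical $2$-cycle $\{P,Q\}$ is a $\Gal(\bar k/\Q)$-stable pair, hence is cut out by a quadratic $g\in\Q[x]$; its discriminant modulo squares records a quadratic twist, while the residual freedom of conjugating $c/x^2$ by $x\mapsto\lambda x$ (which rescales $c$ by a cube) records a cubic twist. The strategy is to conjugate $\phi$ over $\Q$ so that $\{P,Q\}$ becomes the root set of $vx^2-vx+1$, the critical locus of $\psi_v$, and then to verify that the resulting map is forced to equal $\psi_v$. Matching the quadratic twist is elementary: the discriminant $v^2-4v$ realizes every class in $\Q^{*}/(\Q^{*})^2$ because the conic $u^2-dw^2=4$ has a rational point for every $d\in\Q^{*}$. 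The values $v=0$ and $v=4$ are excluded since there $\psi_v$ drops to degree $1$ (for instance $\psi_4=1/(2x+1)$). The main obstacle is the simultaneous matching of \emph{both} twists: a single rational parameter $v$ must reproduce the quadratic twist (the field of $\{P,Q\}$) and the cubic twist (the constant $c$ modulo cubes) at once, i.e. the assignment $v\mapsto[\psi_v]$ must surject onto $H^1(\Gal(\bar k/\Q),S_3)$. I expect this surjectivity --- equivalently, that the normal form $\psi_v$ can always be reached over $\Q$ --- to be the crux, and to require the explicit conjugations recorded in the accompanying code rather than a soft cohomological count.
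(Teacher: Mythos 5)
Your first half coincides with the paper's: the computation that substituting $s=-2r$ into \eqref{symlocuseq} yields $(r+6)^2=0$, so that $C_2(\Q)\cap\calS(\Q)=\{(-6,12)\}=[1/x^2]$ and every $\phi$ satisfying the hypotheses has $\Lambda_1(\phi)=\{-2,-2,-2\}$, is exactly how the paper begins. But from there your proposal has a genuine gap: the entire content of the lemma is the step you defer. You reduce the statement to the claim that the assignment $v\mapsto[\psi_v]$ hits every $\Q$-form of $1/x^2$, and then write that you ``expect this surjectivity\ldots to be the crux'' and that it would ``require the explicit conjugations recorded in the accompanying code.'' That is not a proof of the crux; it is a restatement of the lemma in cohomological language. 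Moreover, the cohomological frame as you set it up is imprecise in two ways that would bite if you tried to complete it: the coefficients are not the constant group $S_3$ but $\Aut(1/x^2)\subset\PGL_2(\bar\Q)$ with its nontrivial Galois action (the rotation $x\mapsto\zeta_3x$ is not defined over $\Q$, so the $3$-part is twisted through $\Gal(\Q(\zeta_3)/\Q)$), and the relevant target is not all of the pointed set $H^1(\Gal(\bar\Q/\Q),\Aut(\phi))$ but only the classes that die in $H^1(\Gal(\bar\Q/\Q),\PGL_2(\bar\Q))$ --- twists corresponding to other classes live on conics without rational points and never arise from a map $\phi\in\Q(x)$ on $\PP^1$. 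Your elementary verification of the quadratic twist (the conic $u^2-dw^2=4$) is fine as far as it goes, but the simultaneous matching of the quadratic and cubic data by a single parameter $v$ is precisely what you do not establish.

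The paper avoids this machinery entirely by a direct normalization that your route makes look harder than it is: since $\PP^1(\Q)$ has infinitely many rational points, one can choose a rational point $P$ with $P$, $\phi(P)$, $\phi^2(P)$ distinct and conjugate by an element of $\PGL_2(\Q)$ so that $\phi(\infty)=0$ and $\phi(0)=1$, forcing $\phi(x)=(ax+b)/(x^2+cx+b)$ with $b\ne0$. The condition $\Lambda_1(\phi)=\{-2,-2,-2\}$, applied via the identity $2+\phi'(x)=q(x)/(x^2+cx+b)^2$ and $x-\phi(x)=p(x)/(x^2+cx+b)$, forces $p\mid q$, hence $a=-2b$ and $c=0$, so $\phi=\psi_v$ with $v=-1/b$ --- no descent, no case analysis of twists. (Incidentally, note that the normalization does not move the critical $2$-cycle to a prescribed locus, which is why no quadratic-versus-cubic matching problem ever appears.) Your observation that $\psi_{r,s}$ degenerates to the constant $-2$ at $(-6,12)$ is correct and does explain why the argument of Lemma \ref{C2param} fails here, but the fix is the explicit computation above, not a cohomological count. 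As written, your proposal is an unfinished program rather than a proof.
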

\begin{proof}
We begin by showing that there is a unique point in $C_2(\Q)\cap\calS(\Q)$, namely $[1/x^2]\in\calM_2(\Q)$. Using the equations $s=-2r$ for $C_2$ and \eqref{symlocuseq} for $\calS$, a simple calculation shows that $C_2(\Q)\cap\calS(\Q)=\{(-6,12)\}$. For the map $f=1/x^2$ we have $\Lambda_1(f)=\{-2,-2,-2\}$, so $(-6,12)$ is the point corresponding to $[f]$ under the isomorphism $\calM_2\to\A^2$. This proves the claim.

For the remainder of the proof we use an argument sketched in \cite{dunaisky}*{Theorem 5.2.1}. Conjugating $\phi$ by an element of $\PGL_2(\Q)$, we may assume that $\phi(\infty)=0$ and $\phi(0)=1$, from which it follows that $\phi$ has the form
\[\phi(x)=\frac{ax+b}{x^2+cx + b}\]
with $a,b,c\in\Q$ and $b\ne 0$. The hypotheses imply that $[\phi]\in C_2(\Q)\cap\calS(\Q)$, so $[\phi]=[1/x^2]$, and therefore $\Lambda_1(\phi)=\{-2,-2,-2\}$. Imposing the latter condition by direct computation, we will show that $a=-2b$ and $c=0$, putting $\phi$ in the desired form $\psi_v$ with $v=-1/b$. Clearly $v\ne 0$, and also $v\ne 4$ since otherwise $\deg\psi_v=1$.

We compute
\[x-\phi(x)=\frac{p(x)}{x^2+cx+b},\quad 2+\phi'(x)=\frac{q(x)}{(x^2+cx+b)^2},\]

where
\begin{align*}
p(x)&=x^3+cx^2+(b-a)x-b,\\
q(x)&=(2x+2c)\cdot p(x) + (a + 2b)x^2 + (2ac + 2bc)x + ab + 2b^2 + bc.
\end{align*}

Since $1/x^2$ has three distinct fixed points, each with multiplier $-2$, the same must hold for $\phi$. It follows that $p$ has three distinct roots, each of which is also a root of $q$; hence $p$ divides $q$. The definitions now imply that
\[(a + 2b)x^2 + (2ac + 2bc)x + ab + 2b^2 + bc=0,\]
leading to $a=-2b$ and $c=0$, as desired.
\end{proof}

\subsection{Dynatomic polynomials}
For a nonconstant rational map $\phi\in k(x)$ and a positive integer $n$, we set

\begin{equation}\label{dynatomic_poly_defn}
\Phi_{n,\phi}:=\prod_{d|n}(x\cdot q_d-p_d)^{\mu(n/d)},
\end{equation}
where $\mu$ is the classical M{\"o}bius function and $p_d,q_d\in k[x]$ are coprime polynomials such that $\phi^d=p_d/q_d$. The rational function defined by \eqref{dynatomic_poly_defn} is in fact a polynomial, called the $n$th \textbf{dynatomic polynomial} of $\phi$; see \cite{silverman}*{\textsection 4.1} for proofs of this and other basic properties of dynatomic polynomials.

The key feature of $\Phi_{n,\phi}$ for our purposes is its relation to $n$-periodic points: writing $\PP^1(\bar k)=\bar k\cup\{\infty\}$ with $\infty=[1,0]$, every $n$-periodic point of $\phi$ in $\bar k$ is a root of $\Phi_{n,\phi}$, and the converse holds if $\Phi_{n,\phi}$ has no repeated root. Using this fact, we obtain a description of dynatomic groups as Galois groups.

\begin{lem}\label{dynatomic_splitting_field}
Suppose that the polynomial  $\Phi_{n,\phi}$ has nonzero discriminant. Then the dynatomic group $G_{n,\phi}$ is the Galois group of $\Phi_{n,\phi}$.
\end{lem}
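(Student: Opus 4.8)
The plan is to prove the stronger statement that $k_{n,\phi}$ is itself a splitting field of $\Phi_{n,\phi}$ over $k$; the desired equality of groups then follows at once, since a splitting field of a separable polynomial (separability being automatic in characteristic $0$) realizes the polynomial's Galois group. Write $L\subseteq\bar k$ for the subfield generated over $k$ by the roots of $\Phi_{n,\phi}$, so that $L$ is such a splitting field and $\Gal(L/k)$ is by definition the Galois group of $\Phi_{n,\phi}$.

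First I would pin down the roots of $\Phi_{n,\phi}$. The key feature of dynatomic polynomials recalled above shows that every $n$-periodic point of $\phi$ lying in $\bar k$ is a root of $\Phi_{n,\phi}$; and since $\disc\Phi_{n,\phi}\ne 0$, the polynomial has no repeated root, so the converse holds as well. Hence the set of roots of $\Phi_{n,\phi}$ is exactly $\Per_n(\phi)\cap\bar k$, the set of \emph{finite} $n$-periodic points of $\phi$, and therefore $L$ is the compositum of the fields $k(P)$ as $P$ ranges over these finite periodic points.

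Then I would compare $L$ with the dynatomic field $k_{n,\phi}$, which is the compositum of $k(P)$ over \emph{all} $P\in\Per_n(\phi)$, potentially including the point $\infty=[1,0]$. The two composita differ only in the possible contribution of $\infty$; but $\infty$ is a $k$-rational point, so $k(\infty)=k$ and adjoining it changes nothing. Thus $k_{n,\phi}=L$, giving $G_{n,\phi}=\Gal(L/k)$, which is the Galois group of $\Phi_{n,\phi}$.

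I expect the only delicate point---and the natural site of a slip---to be the point at infinity: the finite periodic points (the roots of $\Phi_{n,\phi}$) can fail to account for all of $\Per_n(\phi)$ exactly when $\infty$ is itself $n$-periodic, and one must observe that this gap is harmless because $\infty$ is defined over $k$. Beyond this, the argument is a routine comparison of generators, with the hypothesis $\disc\Phi_{n,\phi}\ne 0$ entering only to ensure that no root of $\Phi_{n,\phi}$ has period properly dividing $n$.
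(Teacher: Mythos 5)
Your proof is correct and follows essentially the same route as the paper's: both arguments show that $k_{n,\phi}$ is a splitting field of $\Phi_{n,\phi}$ by using the nonzero discriminant to identify the roots of $\Phi_{n,\phi}$ with the finite $n$-periodic points, and then dispose of the possible $n$-periodicity of $\infty$ via $k(\infty)=k$. The paper compresses this into two sentences, but the content---including the role of the discriminant hypothesis and the point at infinity---is identical.
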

\begin{proof}
It suffices to show that the dynatomic field $k_{n,\phi}$ is a splitting field for $\Phi_{n,\phi}$ over $k$. Since $k(\infty)=k$, the extension $k_{n,\phi}/k$ is generated by the set of $n$-periodic points of $\phi$ in $\bar k$, which is equal to the set of roots of $\Phi_{n,\phi}$.
\end{proof}

We end this section with a useful lemma for purposes of classifying the dynatomic groups of a family of rational maps.

\begin{lem}\label{Galois_iso_lem}
Suppose that the rational functions $\phi,\psi\in k(x)$ are linearly conjugate over $k$. Then $G_{n,\phi}=G_{n,\psi}$ for all $n\ge 1$.
\end{lem}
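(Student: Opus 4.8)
The plan is to show that the two maps have literally the same $n$th dynatomic field, whence $G_{n,\phi}=\Gal(k_{n,\phi}/k)=\Gal(k_{n,\psi}/k)=G_{n,\psi}$ as subgroups of $\Aut(\bar k/k)$. (I would avoid routing through Lemma \ref{dynatomic_splitting_field}, since that would require $\Phi_{n,\phi}$ to have nonzero discriminant, which is not hypothesized here.) The hypothesis gives $\psi=\sigma^{-1}\circ\phi\circ\sigma$ for some $\sigma\in k(x)$ of degree $1$; I would first record that such a $\sigma$ is a M\"obius transformation with coefficients in $k$, i.e.\ an element of $\PGL_2(k)$, inducing a bijection $\PP^1(\bar k)\to\PP^1(\bar k)$ defined over $k$, with inverse $\sigma^{-1}$ also defined over $k$.

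The first substantive step is to verify that $\sigma^{-1}$ carries $\Per_n(\phi)$ bijectively onto $\Per_n(\psi)$, preserving periods. This follows from the identity $\psi^m=\sigma^{-1}\circ\phi^m\circ\sigma$ for every $m\ge 1$, which I would establish by an immediate induction on $m$. Then $\psi^m(\sigma^{-1}(P))=\sigma^{-1}(\phi^m(P))$, so $\sigma^{-1}(P)$ is fixed by $\psi^m$ exactly when $P$ is fixed by $\phi^m$, and the least such $m$ is therefore the same for both points. Hence $P\mapsto\sigma^{-1}(P)$ and its inverse $Q\mapsto\sigma(Q)$ are mutually inverse bijections between the two sets of $n$-periodic points.

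The second step is to show that this bijection preserves fields of definition: $k(P)=k(\sigma^{-1}(P))$ for every $P\in\PP^1(\bar k)$. Writing $\sigma$ in homogeneous coordinates with entries in $k$, the coordinates of $\sigma^{-1}(P)$ are $k$-rational expressions in those of $P$, so $k(\sigma^{-1}(P))\subseteq k(P)$; applying the same argument to the $k$-rational map $\sigma$ gives the reverse inclusion, and equality follows. Taking the composite of the fields $k(P)$ over the matched sets of periodic points then yields $k_{n,\phi}=k_{n,\psi}$, and the lemma follows.

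The only genuine subtlety is bookkeeping at the point $\infty$: a priori $\sigma$ may interchange $\infty$ with a finite point, so I would phrase both the periodicity identity and the field-of-definition argument intrinsically on $\PP^1$, using homogeneous coordinates, rather than on the affine line $\bar k$. This removes any need for case analysis and also sidesteps the convention for the field of definition of $\infty$. I expect no serious obstacle beyond this: each step is a formal consequence of $\sigma$ being defined over $k$.
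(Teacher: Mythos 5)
Your proof is correct and takes essentially the same route as the paper's: conjugation by the degree-one map $\sigma\in k(x)$ induces a period-preserving bijection between $\Per_n(\phi)$ and $\Per_n(\psi)$ with $k(P)=k(\sigma^{-1}(P))$, so the dynatomic fields coincide and the Galois groups are literally equal. The paper's three-line proof leaves implicit exactly the details you fill in (the iterate identity, the field-of-definition argument via $k$-rationality of $\sigma$, and the bookkeeping at $\infty$), and your observation that one should not route through Lemma~\ref{dynatomic_splitting_field} matches the paper, which likewise argues directly with $k_{n,\phi}=k(\Per_n(\phi))$.
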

\begin{proof}
By definition, $G_{n,\phi}=\Gal(k_{n,\phi}/k)$, where $k_{n,\phi}=k(\Per_n(\phi))$. Let $s\in k(x)$ be a map of degree 1 satisfying $s^{-1}\circ\phi\circ s=\psi$. Then $s$ gives rise to a bijection $\Per_n(\psi)\to\Per_n(\phi)$ with the property that $k(s(P))=k(P)$ for all $P\in\Per_n(\psi)$. Therefore, $k(\Per_n(\psi))=k(\Per_n(\phi))$ and $k_{n,\psi}=k_{n,\phi}$.
\end{proof}

\section{The third dynatomic group}\label{Phi3_section}
In this section we apply the method of Galois specialization (Section \ref{spec_section}) to determine all the groups that can be realized as dynatomic groups $G_{3,\phi}$ for maps $\phi\in\Rat_2(\Q)$ having a 2-periodic critical point. For this purpose, it will be convenient to distinguish between dynamical systems according to their automorphism groups; our main results are Proposition \ref{C2G3} in the generic case of trivial automorphism group, and Proposition \ref{C2G3_autos} in the special case of maps with nontrivial automorphisms.

\begin{notation} For subgroups $U,V$ of a group $G$, we write $U\le V$ if $U$ is contained in a conjugate of $V$ by an element of $G$. If $\rho\in\Q(x)$ is a rational function, we write $\Im\rho$ for the image under $\rho$ of the set of all rational numbers where $\rho$ is defined. Throughout the remainder of the article, we use the notation $q_H,\pi_H, Y_H$ introduced in Notation \ref{HIT_notation}.
\end{notation}

\subsection{Maps with trivial automorphism group}\label{trivial_section_G3} In view of Lemmas \ref{C2param} and \ref{Galois_iso_lem}, our main task is to classify the groups $G_{3,v}:=G_{3,\phi_v}$ with $v\in\Q\setminus\{0\}$. The following are a few preliminary calculations needed to obtain such a classification.

Let $t$ and $x$ be indeterminates, and let $\phi\in\Q(t)(x)$ be defined by
\begin{equation}\label{genphi}
\phi=\frac{t(x-1)}{x^2}.
\end{equation}

A calculation of the dynatomic polynomial $\Phi_3=\Phi_{3,\phi}\in\Q(t)[x]$ yields
\begin{align*}
\Phi_3(t,x)=x^6 - 2tx^5 + (t^2 + 3t)x^4 + (-3t^2 - t)x^3 + 4t^2x^2 - 3t^2x + t^2.
\end{align*}

The discriminant of $\Phi_3$ is given by $\disc\Phi_3=-27t^{10}(t^2 - 9t + 27)^2$. It follows that, for $v\in\Q\setminus\{0\}$, the polynomial $\Phi_{3,\phi_v}(x)=\Phi_3(v,x)$ has nonzero discriminant; hence, by Lemma \ref{dynatomic_splitting_field}, $G_{3,v}$ is the Galois group of $\Phi_3(v,x)$.

Let $E/\Q(t)$ be a splitting field for $\Phi_3$, and $G=\Gal(E/\Q(t))$ the Galois group of $\Phi_3$. Computing a permutation representation of $G$, we find that $G\equiv W$, where $W$ is the centralizer of the permutation $(1,2,3)(4,5,6)$ in the symmetric group $S_6$. Note that $W$ factors as a wreath product $(\Z/3\Z)\wr S_2$.

Computing the lattice of subgroups of $G$, we find that $G$ has three maximal subgroups up to conjugacy, namely
\begin{align*}
A &= \langle(1,2,3),\; (1,2,3)(4,5,6)\rangle,\\
B &= \langle(1,5,2,6,3,4),\; (1,2,3)(4,5,6)\rangle,\\
C &= \langle(1,6)(2,4)(3,5),\; (1,2,3)(4,6,5)\rangle.
\end{align*}

The following are defining polynomials for the fixed fields of $A,B,C$:
\begin{align*}
q_A &= x^2 + (2t^2 - 6t + 12)x + t^4 - 6t^3 + 24t^2 - 36t + 36,\\
q_B &= x^3 + (8t^2 - 18t)x^2 + (20t^4 - 84t^3 + 72t^2)x\; +\\
&\quad 16t^6 - 96t^5 + 
    168t^4 - 144t^3 + 216t^2,\\
q_C &= x^3 + (8t^2 - 18t)x^2 + (20t^4 - 84t^3 + 72t^2)x\; + \\&\quad16t^6 - 96t^5 + 
    160t^4 - 72t^3.
\end{align*}

Up to conjugacy in $G$, the group $C$ has two maximal subgroups, namely,
\[H=\langle(1, 6)(2, 4)(3, 5)\rangle\quad\text{and}\quad J=\langle(1, 2, 3)(4, 6, 5)\rangle.\]
Moreover, one can verify the relations $H\le B$ and $J\le A$. 

\begin{lem}\label{G3curves} With groups $A,B,C$ as above, the following hold.
\begin{enumerate}[(a)]
\item\label{G3crva} $Y_A(\Q)=\{(0,-6)\}$.
\item\label{G3crvb} $Y_B(\Q)=\{(0,0),(3,0),(3, -18)\}$.
\item\label{G3crvc} Let $\eta\in\Q(t)$ be defined by
\begin{equation}\label{etamap}
\eta(t) = \frac{t^3 + 3t^2 - 6t + 1}{t(t-1)}.
\end{equation}
There is a rational function $\lambda\in\Q(t)$ such that
\[Y_C(\Q)=\{(0,0)\}\cup\{(\eta(v),\lambda(v)): v\in\Q\setminus\{0,1\}\}.\]
\end{enumerate}
\end{lem}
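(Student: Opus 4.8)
The three parts amount to determining the rational points on the plane curves $Y_A, Y_B, Y_C$ cut out by $q_A, q_B, q_C$, and my plan is to treat them according to their geometry, which differs sharply in the three cases. For part (\ref{G3crva}) I would exploit the fact that $q_A$ is quadratic in $x$: completing the square (equivalently, computing the discriminant of $q_A$ in $x$) gives $\disc_x q_A = -12t^2$. For a given $t\in\Q$ a point $(t,x)\in Y_A(\Q)$ exists precisely when this discriminant is a square in $\Q$; since $-12t^2\le 0$ with equality only at $t=0$, the only possibility is $t=0$, and there $q_A(0,x)=(x+6)^2$, giving the unique rational point $(0,-6)$.

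For part (\ref{G3crvb}) the curve $Y_B$ is a genuine plane cubic in $x$ and the stated answer is finite, so my plan is to pass to its smooth projective model and determine all rational points there. First I would record the nature of the listed points by specializing: $q_B(0,x)=x^3$ and $q_B(3,x)=x^2(x+18)$, so $(0,0)$ and $(3,0)$ are singular points of $Y_B$, while $(3,-18)$ is a smooth rational point. Because $Y_B$ carries a smooth rational point yet has only finitely many rational points, it must have positive genus (a genus-$0$ curve with a smooth $\Q$-point is $\PP^1$ over $\Q$ and has infinitely many). I would therefore compute the genus and a birational model in Weierstrass form, then determine the Mordell--Weil group of the resulting elliptic curve, expecting rank $0$ so that the group of rational points is finite and computable as torsion. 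Pulling this finite list back through the birational map to $Y_B$ (including points at infinity), and carefully accounting for the several model points lying over the singular points $(0,0)$ and $(3,0)$, I would verify that exactly the three listed points survive. I expect this rational-point determination to be the \emph{main obstacle}: it requires the explicit Weierstrass reduction, a provably correct rank computation, and a careful desingularization analysis, since a priori a singular rational point need not lift to a rational point of the smooth model, and conversely several model points may lie over one plane point. Should the genus exceed $1$, I would instead pin down the rational points by Chabauty--Coleman together with the Mordell--Weil sieve.

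For part (\ref{G3crvc}) the presence of an \emph{infinite}, explicitly parametrized family of rational points signals that $Y_C$ has geometric genus $0$. My plan is to verify this and to compute a birational parametrization $\rho:\PP^1\ratmap Y_C$ over $\Q$ (for instance by the algorithms of \cite{sendra-winkler-diaz}); the first coordinate of $\rho$ is then a degree-$3$ rational map $\PP^1\to\PP^1$ which I would identify with the stated $\eta$, and $\lambda$ is its second coordinate. The delicate point is that $Y_C$ is singular: specializing gives $q_C(0,x)=x^3$, so $(0,0)$ is a triple point, and the parameters $v$ mapping to the fiber $t=0$ are exactly the roots of the numerator $t^3+3t^2-6t+1$ of $\eta$, which has no rational root. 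Consequently the three branches over $(0,0)$ form a single Galois orbit defined over a cubic field: $(0,0)$ is a rational point of the plane curve that does \emph{not} lift to a rational point of the smooth model, and hence is not attained by $\rho$ at any rational parameter value. This is precisely why it must be adjoined separately. Conversely, every smooth rational point of $Y_C$ is $\rho(v)$ for a unique rational $v$, and I would check that $v\in\{0,1\}$ are exactly the poles of $\eta$, so that these parameters map to points at infinity rather than to affine points of $Y_C$, which accounts for their exclusion. Assembling these facts yields $Y_C(\Q)=\{(0,0)\}\cup\{(\eta(v),\lambda(v)):v\in\Q\setminus\{0,1\}\}$; the one remaining routine task, once the parametrization is in hand, is to confirm that $(0,0)$ is the only rational singular point of $Y_C$ that fails to lift, so that no further isolated points need be added.
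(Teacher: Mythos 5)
Your proposal is correct and takes essentially the same route as the paper: part (\ref{G3crva}) via $\disc_x q_A=-12t^2$, part (\ref{G3crvb}) by passing from the genus-$1$ curve $Y_B$ to a rank-$0$ elliptic curve (the paper identifies it as Cremona 27a1, with torsion of order $3$) and pulling back its three rational points, and part (\ref{G3crvc}) by a genus-$0$ parametrization $v\mapsto(\eta(v),\lambda(v))$ whose inverse is defined at every rational point except $(0,0)$, with $v=0,1$ excluded as the poles of $\eta$. Your additional bookkeeping about the singular points $(0,0)$, $(3,0)$ and the cubic Galois orbit over $(0,0)$ on $Y_C$ is exactly what the paper's computed birational maps and their loci of indeterminacy verify implicitly.
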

\begin{proof}
Since $q_A$ is quadratic in $x$, $Y_A$ is isomorphic to the affine curve
\[y^2=\disc q_A = -12t^2.\]
The latter curve has a unique rational point, namely $(0,0)$, so \eqref{G3crva} follows.

We compute that $Y_B$ has genus 1. Using the nonsingular point $(3,-18)$ on $Y_B$, we obtain a birational map $Y_B\ratmap E$ defined over $\Q$, where $E$ is the elliptic curve with Cremona label 27a1. The latter curve has rank 0 and torsion subgroup of order 3. Now \eqref{G3crvb} follows by pulling back the rational points on $E$ to points on $Y_B$.

The curve $Y_C$ has genus 0. Using the nonsingular point $(9/2,-27)$ on $Y_C$, we compute a birational map $\A^1\ratmap Y_C$ given by $v\mapsto(\eta(v),\lambda(v))$, where $\eta$ is defined as in \eqref{etamap}. Computing an inverse map $Y_C\ratmap\A^1$, we find that this map is defined at every rational point other than $(0,0)$. This proves \eqref{G3crvc}. 
\end{proof}

We now prove the main result for this section. Together with Lemma \ref{C2param}, this will complete the proof of Theorem \ref{period2_main_thm_no_autos}.

\begin{prop}\label{C2G3} 
Let $v\in\Q\setminus\{0,3\}$ and let $\eta$ be the map \eqref{etamap}.
\begin{enumerate}[(a)]
\item\label{C2G3a} If $v\notin\Im(\eta)$, then $G_{3,v}\equiv W$.
\item\label{C2G3b} If $v\in\Im(\eta)$, then $G_{3,v}\equiv C$.
\end{enumerate}
\end{prop}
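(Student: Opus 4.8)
The plan is to apply the specialization machinery of Proposition~\ref{specialization_curves} directly to the polynomial $\Phi_3(t,x)$, using the subgroup lattice and the rational-point data already assembled in Lemma~\ref{G3curves}. Since $v\in\Q\setminus\{0,3\}$ lies outside the root set $\Delta$ of $\disc\Phi_3=-27t^{10}(t^2-9t+27)^2$ (the factor $t^2-9t+27$ has no rational roots), we know $G_{3,v}$ is the Galois group of $\Phi_3(v,x)$ by Lemma~\ref{dynatomic_splitting_field}, and Proposition~\ref{specialization_curves}(1) identifies $G_{3,v}$ with the decomposition group $D_P$ of a prime $P$ over $p_v$. The whole proof then reduces to pinning down, for each $v$, which subgroup of $G\equiv W$ the group $D_P$ is conjugate to, and this is controlled by membership of $v$ in the projections $\pi_H(\Q)$.

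First I would record the governing principle from the discussion after Notation~\ref{HIT_notation}: for $H\le G$ and $v\notin\Delta$ (and $v$ avoiding the finitely many discriminant roots of the relevant $q_H$), we have $D_P\le H$ if and only if $v\in\pi_H(\Q)$. The strategy is to show that $G_{3,v}$ drops below the full group $W$ exactly when $v\in\Im(\eta)$, and that in that case it lands precisely in $C$. For part~(a), I would argue that if $v\notin\Im(\eta)$ then $v$ lies in none of $\pi_A(\Q)$, $\pi_B(\Q)$, $\pi_C(\Q)$, so $D_P$ is contained in no proper maximal subgroup of $G$ up to conjugacy, forcing $D_P=G$ and hence $G_{3,v}\equiv W$. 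Concretely, $\pi_A(\Q)=\{0\}$ and $\pi_B(\Q)=\{0,3\}$ by Lemma~\ref{G3curves}(a),(b), both excluded by hypothesis; and $\pi_C(\Q)=\{0\}\cup\Im(\eta)$ by Lemma~\ref{G3curves}(c), so $v\notin\Im(\eta)$ together with $v\ne 0$ gives $v\notin\pi_C(\Q)$. Thus $D_P$ escapes every maximal subgroup and equals $G$.

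For part~(b), if $v\in\Im(\eta)$ then $v\in\pi_C(\Q)$, so $D_P\le C$; the task is to prove $D_P$ is not contained in any proper subgroup of $C$, i.e.\ that equality $G_{3,v}\equiv C$ holds rather than a further drop. Here I would invoke the two maximal subgroups $H,J$ of $C$ identified in the excerpt, together with the stated relations $H\le B$ and $J\le A$. If $D_P$ were contained in a conjugate of $H$, then $D_P\le B$, forcing $v\in\pi_B(\Q)=\{0,3\}$, contradicting $v\ne 0,3$; similarly $D_P\le J$ would give $D_P\le A$ and $v\in\pi_A(\Q)=\{0\}$, again a contradiction. Since $H$ and $J$ exhaust the maximal subgroups of $C$ up to conjugacy, $D_P$ lies in no proper subgroup of $C$, whence $D_P\equiv C$ and $G_{3,v}\equiv C$.

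The main obstacle is bookkeeping about the exceptional specialization values where Proposition~\ref{specialization_curves}(2) does not apply, namely the finitely many $v$ that are roots of $\disc q_H$ for the relevant $H\in\{A,B,C\}$. I would need to verify that these exceptional $v$ either coincide with already-excluded values ($0,3$, and the non-rational roots of $t^2-9t+27$) or can be handled by a direct Galois group computation of $\Phi_3(v,x)$, confirming consistency with the stated classification. A secondary subtlety is justifying that the relations $H\le B$ and $J\le A$ transport correctly under the identification $G\equiv W$, so that the implication ``$D_P\le H\Rightarrow D_P\le B$'' is legitimate; this follows from the definition of $\le$ as containment up to $G$-conjugacy, but it is the one place where the argument genuinely uses the internal structure of the subgroup lattice rather than just the rational-point counts.
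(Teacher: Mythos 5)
Your proposal is correct and follows essentially the same route as the paper's own proof: identify $G_{3,v}$ with a decomposition group $D_P$ via Proposition~\ref{specialization_curves}, rule out containment in the maximal subgroups using $\pi_A(\Q)=\{0\}$, $\pi_B(\Q)=\{0,3\}$, and $\pi_C(\Q)=\{0\}\cup\Im(\eta)$ for part (a), and for part (b) exclude any further drop below $C$ via the relations $H\le B$ and $J\le A$ for the two maximal subgroups of $C$. The discriminant bookkeeping you flag as the main remaining obstacle is dispatched in the paper exactly as you anticipate, by checking that the only rational roots of $\disc q_A$, $\disc q_B$, $\disc q_C$ are $0$ and $3$, both excluded by hypothesis.
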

\begin{proof}
Recall that $G_{3,v}$ is the Galois group of $\Phi_3(v,x)$. By Proposition \ref{specialization_curves} applied to $\theta=\Phi_3$, we have $G_{3,v}\equiv D_P$, where $P$ is any prime of the splitting field $E/\Q(t)$ lying over the prime $(t-v)$ of $\Q(t)$.

Suppose first that $v\notin\Im(\eta)$. Then we claim that $D_P=G$. Otherwise, $D_P$ is contained in a maximal subgroup of $G$, which must be conjugate to either $A$, $B$, or $C$. By Proposition \ref{specialization_curves}, one of the polynomials $q_A(v,x), q_B(v,x), q_C(v,x)$ must have a rational root. (The only rational roots of the discriminants of $q_A,q_B,q_C$ are $0$ and $3$, both of which are excluded from consideration.) Thus $v$ is the first coordinate of a rational point on either $Y_A$, $Y_B$, or $Y_C$.  Now Lemma \ref{G3curves} implies that
\[v\in\pi_A(\Q)\cup\pi_B(\Q)\cup\pi_C(\Q)=\{0,3\}\cup\Im(\eta),\]
a contradiction. Hence, $D_P=G$ and therefore $G_{3,v}\equiv G\equiv W$, proving \eqref{C2G3a}. 

Now suppose that $v\in\Im(\eta)$. Then $v\in\pi_C(\Q)$, so $D_P\le C$. Replacing $P$ by a conjugate prime if necessary, we may assume that $D_P\subseteq C$. We claim that $D_P=C$, which will complete the proof of the proposition.

Assume that $D_P$ is a proper subgroup of $C$. Since $H$ and $J$ are the only maximal subgroups of $C$ (up to $G$-conjugacy), we must have $D_P\le H$ or $D_P\le J$. However, since $H\le B$ and $J\le A$, this implies $D_P\le B$ or $D_P\le A$, and thus $v\in\pi_B(\Q)\cup\pi_A(\Q)=\{0,3\}$, a contradiction. Hence $D_P=C$.
\end{proof}

\subsection{Maps with nontrivial automorphisms}\label{autos_section_G3} In this section we classify the groups $G_{3,\phi}$ for maps $\phi\in\Rat_2(\Q)$ having a 2-periodic critical point and nontrivial automorphism group. In view of Lemma \ref{dunaiskylemma}, it suffices to classify the groups $G_{3,v}:=G_{3,\psi_v}$ with $v\in\Q\setminus\{0,4\}$. The overall process will be similar to that of Section \ref{trivial_section_G3}, with differences in the methods used to find rational points on curves.

For the rational function
\begin{equation}\label{genpsi}
\psi:=\frac{2x-1}{tx^2-1}\in\Q(t)(x),
\end{equation}

the third dynatomic polynomial $\Phi_3=\Phi_{3,\psi}\in\Q(t)[x]$ is given by

\begin{align*}
\Phi_3(t,x)=(t^4 - t^3)x^6 - 9t^3x^5 + (3t^3 + 33t^2)x^4 + (-t^3 - 26t^2)x^3 + \\
    (18t^2 - 27t)x^2 + (-6t^2 + 15t)x + t^2 - 4t + 3.
\end{align*}

For $v\in\Q\setminus\{0,4\}$ the discriminant of $\Phi_3(v,x)$ is nonzero, so the dynatomic group $G_{3,v}$ is the Galois group of $\Phi_3(v,x)$, by Lemma \ref{dynatomic_splitting_field}.  

Letting $E/\Q(t)$ be a splitting field for $\Phi_3$ and $G=\Gal(E/\Q(t))$, a Galois group computation shows that $G\equiv W$, where $W=(\Z/3\Z)\wr S_2$. For the analysis in this section we will need to consider several subgroups of $G$, denoted
$A,B,C,H,J,K,M$. Up to conjugacy in $G$, the groups $A,B,C$ are the maximal subgroups of $G$, and are uniquely determined by the following properties: $A$ has order 9, $B$ is cyclic of order 6, and $C$ is non-cyclic of order 6. The groups $J,K,M$ are the maximal subgroups of $A$, with $J$ uniquely determined by the condition $J\le C$, and $M$ by the condition $M\le B$. The group $H$ is the unique subgroup of $G$ (up to conjugacy) having order 2. The following relations are easily verified:

\begin{itemize}
\item $J\le C$ and $M\le B$;
\item $H\le B$ and $H\le C$;
\item The maximal subgroups of $B$ are conjugate to $H, M$;
\item The maximal subgroups of $C$ are conjugate to $H, J$.
\end{itemize}

Computing the fixed fields of the above subgroups of $G$, we obtain defining polynomials $q_A,q_B,q_C,\ldots, q_M$. For $v\in\Q\setminus\{0,1,4\}$, all of the specialized polynomials $q(v,x)$, as well as $\Phi_3(v,x)$, have nonzero discriminant. The first two $q$-polynomials are given by
\begin{align*}
q_A &= x^2 + 9t^3x + 27t^6 - 27t^5,\\
q_B &= x^3 + (-18t^7 + 117t^6 + 144t^5)x^2 + (-324t^{13} - 729t^{12} + 15552t^{11} + 
    5184t^{10})x \\+ & 216t^{22}- 3240t^{21} + 18792t^{20} - 54378t^{19} + 59859t^{18} -
    16848t^{17} + 527040t^{16}.
\end{align*}

\begin{lem}\label{G3curves_auto} With notation as above, the following hold.
\begin{enumerate}[(a)]
\item\label{G3crva} $\pi_A(\Q)=\Im\alpha$, where $\alpha(t)=(1 + t)^2/(1 - t + t^2)$.
\item\label{G3crvb} $\pi_B(\Q)=\{1\}\cup\Im\beta$, where $\beta(t)=t^2(3 - t)$.
\item\label{G3crvc} $\pi_C(\Q)=\{0,1,4\}$.
\item\label{G3crvk} $\pi_K(\Q)=\{0,4\}\cup\Im\kappa$, where
\[\kappa(t)=\frac{(t^3 - 3t^2 - 6t - 1)^2}{(1 + t + t^2)^3}.\]
\item\label{G3crvm} $\pi_M(\Q)\supseteq\Im\mu$, where $\mu(t)=\alpha\left((t^3 - 3t - 1)/(t^3 + 3t^2 - 1)\right)$.
\end{enumerate}
\end{lem}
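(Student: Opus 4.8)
The plan is to determine each projection $\pi_H(\Q)$ by analyzing the rational points of the corresponding plane curve $Y_H\colon q_H(t,x)=0$, following the template of the proof of Lemma \ref{G3curves}. For each $H\in\{A,B,C,K,M\}$ I would first compute the geometric genus of $Y_H$ and then proceed according to that genus: parametrize when the genus is $0$, reduce to a Weierstrass model when the genus is $1$, and invoke Faltings' theorem together with an explicit rational-point computation when the genus is larger. The claimed descriptions of $\pi_H(\Q)$ then follow by projecting $Y_H(\Q)$ onto the $t$-coordinate, since by Proposition \ref{specialization_curves} the condition $v\in\pi_H(\Q)$ is equivalent to $q_H(v,x)$ having a rational root.

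For part \eqref{G3crva}, since $q_A$ is quadratic in $x$, the curve $Y_A$ is birational to the conic $y^2=\disc_x q_A$. A direct computation gives $\disc_x q_A=27t^5(4-t)=(3t^2)^2\cdot 3t(4-t)$, so $q_A(v,x)$ has a rational root precisely when $3v(4-v)$ is a square in $\Q$. This condition defines a smooth conic carrying the rational point $(v,w)=(0,0)$; parametrizing by the pencil of lines through that point produces a rational function whose image coincides with $\Im\alpha$, and I would confirm the stated formula by checking directly that $3\,\alpha(s)\,(4-\alpha(s))=\bigl(3(1-s^2)/(1-s+s^2)\bigr)^2$ is a square for every $s$. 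This yields $\pi_A(\Q)=\Im\alpha$.

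For parts \eqref{G3crvb} and \eqref{G3crvk} I expect $Y_B$ and $Y_K$ to have genus $0$ (consistent with the projections $Y_B\to\A^1$ and $Y_K\to\A^1$ having degrees $3$ and $6$, matching the degrees of $\beta$ and $\kappa$). After exhibiting a nonsingular rational point on each, I would compute a parametrization $\A^1\ratmap Y_H$ and read off its $t$-coordinate as a rational function of the parameter; up to reparametrization this function is $\beta(t)=t^2(3-t)$ for $Y_B$ and $\kappa$ for $Y_K$. The finitely many additional values in the statement (namely $1$ for $B$, and $0,4$ for $K$, none of which lie in the respective images, as a rational-root check confirms) correspond to rational points not captured by the parametrization; as in Lemma \ref{G3curves}, I would pin these down by computing the inverse map $Y_H\ratmap\A^1$ and locating the rational points where it fails to be defined. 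For part \eqref{G3crvm} only the inclusion $\Im\mu\subseteq\pi_M(\Q)$ is required, so it suffices to substitute $v=\mu(s)$ into $q_M$ and produce a rational root; writing $\mu=\alpha\circ\gamma$ with $\gamma(t)=(t^3-3t-1)/(t^3+3t^2-1)$, this is a direct verification yielding the root as an explicit rational function of $s$.

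The main obstacle will be part \eqref{G3crvc}, where the claim $\pi_C(\Q)=\{0,1,4\}$ asserts that $Y_C$ has \emph{no} rational points lying over $t$-values outside the excluded set. Since a finite nonempty projection rules out the genus-$0$ case, $Y_C$ must have genus $\ge 1$, and the argument requires a \emph{complete} determination of $Y_C(\Q)$ rather than a routine parametrization: after computing the genus I would either exhibit a rank-$0$ elliptic quotient (if the genus is $1$) or apply Chabauty-type methods or a Mordell--Weil sieve backed by Faltings (if the genus is at least $2$) to show that every rational point projects into $\{0,1,4\}$. A secondary difficulty, already present in parts \eqref{G3crvb} and \eqref{G3crvk}, is verifying that the genus-$0$ parametrizations account for all rational points, which demands a careful analysis of the singular locus of $Y_H$ and of the fibers over the bad values $0,1,4$.
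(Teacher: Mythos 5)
Your handling of parts (a), (b), (d), and (e) is essentially the paper's. For $A$, $B$, $K$ the paper likewise parametrizes the genus-$0$ curves $Y_H$ and accounts for the finitely many exceptional $t$-values by locating where the inverse map $Y_H\ratmap\A^1$ fails to be defined; your discriminant reduction for $q_A$ is correct (note $\disc_x q_A=27t^5(4-t)=9t^4\cdot 3t(4-t)$, so one must separately check the value $t=0$ where the square factor vanishes, but $\alpha(-1)=0$ takes care of it). For (e), since $\mu$ is handed to you in the statement, your direct substitution is exactly the paper's final verification: the paper's construction of the curve $\alpha(x)=\beta(y)$ and its parametrization with first coordinate $m(v)=(v^3-3v-1)/(v^3+3v^2-1)$ only serves to \emph{discover} $\mu=\alpha\circ m$, after which it too just factors $q_M(\mu(v),x)$ over $\Q(v)$ and exhibits a root.

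The genuine gap is in part (c). You infer that, since $\pi_C(\Q)$ is finite and nonempty, $Y_C$ must be a curve of genus $\ge 1$, and you plan to exhibit a rank-$0$ elliptic quotient or run Chabauty/Mordell--Weil sieve arguments. This presupposes that $Y_C$ is geometrically irreducible, and it is not: although $q_C$ is irreducible over $\Q(t)$ (it defines the degree-$3$ fixed field of $C$), it factors over $\overline\Q$ --- concretely over the cubic field $L=\Q(g)$ with $g^3-3g+1=0$ --- into three conjugate factors, each \emph{linear} in $x$. Thus $Y_C$ is geometrically a union of three conjugate rational curves, none defined over $\Q$; ``the genus'' is not even well defined, your dichotomy (genus $0$ with infinitely many points versus genus $\ge 1$) omits precisely the finiteness mechanism operating here, and Chabauty-type machinery does not apply. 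The paper's argument is instead elementary: writing each factor as $u_2(t,x)g^2+u_1(t,x)g+u_0(t,x)$ with $u_i\in\Q[t][x]$, any point $(a,b)\in Y_C(\Q)$ forces $u_2(a,b)=u_1(a,b)=u_0(a,b)=0$ by the linear independence of $1,g,g^2$ over $\Q$, so $(a,b)$ lies on one of three explicit $0$-dimensional subschemes of the plane, whose rational points are computed directly to be $\{(0,0),(1,-81),(4,-110592)\}$, whence $\pi_C(\Q)=\{0,1,4\}$. As written, your plan for (c) would stall at the very first step, and no descent on Jacobians would get it moving again.
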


\begin{proof}
The curve $Y_A$ has genus 0. Using the nonsingular point $(1,0)$ on $Y_A$, we compute a birational map $\A^1\ratmap Y_A$ given by $v\mapsto (\alpha(v),z(v))$ for some rational function $z$. This map is defined everywhere on $\A^1(\Q)$, so $\Im\alpha\subseteq\pi_A(\Q)$. Computing an inverse, we obtain a map $Y_A\ratmap\A^1$ defined at every rational point different from $(0, 0), (1, -9)$, and $(1, 0)$. It follows that $\pi_A(\Q)\setminus\Im\alpha\subseteq\{0,1\}$.
Moreover, direct computation shows that $0,1\in\Im\alpha$. This proves \eqref{G3crva}, and parts \eqref{G3crvb} and \eqref{G3crvk} are proved similarly.

For the proof of \eqref{G3crvc} we use the methods described in  \cite{fieker-sutherland}*{\textsection 6} to factor the polynomial $q_C$ over $\overline\Q$. Letting $L$ be the cubic number field generated by a root $g$ of the polynomial $x^3 - 3x + 1$, we have the  factorization
\[q_C(t,x)=(u_2g^2 + u_1g + u_0)(v_2g^2 + v_1g + v_0)(w_2g^2 + w_1g + w_0),\]

in the ring $L[t][x]$, where $u_2(t,x)=w_1(t,x)=0$ and
\begin{align*}
u_1(t,x)&=6t^7 - 30t^6 + 24t^5,\\
u_0(t,x)&=-6t^7 + 39t^6 + 48t^5 + x,\\
v_2(t,x)&=-6t^7 + 30t^6 - 24t^5,\\
v_1(t,x)&=v_2(t,x),\\
v_0(t,x)&=6t^7 - 21t^6 + 96t^5 + x,\\
w_2(t,x)&=-v_2(t,x),\\
w_0(t,x)&=-18t^7 + 99t^6 + x.\\
\end{align*}

\vspace{-5mm}
Note that all the polynomials displayed above have rational coefficients. For any point $(a,b)\in Y_C(\Q)$ we have $q_C(a,b)=0$, so the given factorization of $q_C(t,x)$ implies that one of the following equalities must hold:
\begin{align*}
(u_2(a,b),u_1(a,b),u_0(a,b))&=(0,0,0),\\
(v_2(a,b),v_1(a,b),v_0(a,b))&=(0,0,0),\\
(w_2(a,b),w_1(a,b),w_0(a,b))&=(0,0,0).
\end{align*}
Each of these three equalities defines a 0-dimensional subscheme of the affine plane, whose rational points can therefore be computed. For all three schemes we obtain the same set of rational points, namely
\[\{(0, 0), (1, -81), (4, -110592)\}.\] 
Now \eqref{G3crvc} now follows immediately.

Finally, to prove \eqref{G3crvm} we parametrize the curve $Y_M$. A search for a nonsingular rational point of small height on $Y_M$ fails to find any such point, so an indirect approach is necessary. Given that $M\le A$ and $M\le B$, we have
\[\pi_M(\Q)\subseteq (\pi_A(\Q)\cap\pi_B(\Q))\cup\{0,1,4\},\]
which suggests using the known parametrizations of $Y_A$ and $Y_B$. Let $\alpha, i$ be the components of the map $\A^1\ratmap Y_A$, and $\beta,j$ the components of $\A^1\ratmap Y_B$. The equation $\alpha(x)=\beta(y)$ defines, after clearing denominators, an affine plane curve which we denote by $Y$. This curve has genus 0 and is easily parametrized; we thus obtain a birational map $\A^1\ratmap Y$ with first coordinate
\[m(v)=\frac{v^3 - 3v - 1}{v^3 + 3v^2 - 1}.\]
Defining $\mu=\alpha\circ m$, we expect to find a parametrization of $Y_M$ whose first coordinate is $\mu$. With $v$ as an indeterminate, we factor the polynomial $q_M(\mu(v),x)\in\Q(v)[x]$ and find that it has six roots in $\Q(v)$; choosing a root $p$, we obtain have a parametrization $\A^1\ratmap Y_M$ given by $v\mapsto (\mu(v),p(v))$. This map is defined everywhere on $\A^1(\Q)$, so \eqref{G3crvm} follows.
\end{proof}

The next proposition, together with Lemma \ref{dunaiskylemma}, completes the proof of Theorem \ref{period2_main_thm_autos}.

\begin{prop}\label{C2G3_autos}
The following statements hold for $v\in\Q\setminus\{0,1,4\}$.
\begin{enumerate}[(a)]
\item\label{C2G3autosm} If $v\in\Im\mu$, then $G_{3,v}\equiv M$.
\item\label{C2G3autosk} If $v\in\Im\kappa$, then $G_{3,v}\equiv K$.
\item\label{C2G3autosw} If $v\notin\Im\alpha\cup\Im\beta$, then $G_{3,v}\equiv W$.
\item\label{C2G3autosam} If $v\in\Im\alpha$, then $G_{3,v}\equiv A$, $K$, or $M$.
\item\label{C2G3autosbm} If $v\in\Im\beta$, then $G_{3,v}\equiv B$ or $M$.
\end{enumerate}
\end{prop}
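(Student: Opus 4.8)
The plan is to run the specialization machinery of Proposition \ref{specialization_curves} exactly as in the proof of Proposition \ref{C2G3}, so that the whole argument reduces to a walk through the subgroup lattice of $G$. Fix $v\in\Q\setminus\{0,1,4\}$, recall that $G_{3,v}$ is the Galois group of $\Phi_3(v,x)$, and let $P$ be a prime of $E$ over $(t-v)$; then $G_{3,v}\equiv D_P$. For each of the subgroups $A,B,C,K,M$ the discriminant hypotheses are satisfied, since all the polynomials $q_H(v,x)$ and $\Phi_3(v,x)$ have nonzero discriminant for $v\notin\{0,1,4\}$, so Proposition \ref{specialization_curves}(2) supplies the dictionary
\[
D_P\le H\iff v\in\pi_H(\Q).
\]
The single device driving every case is the equality $\pi_C(\Q)=\{0,1,4\}$ from Lemma \ref{G3curves_auto}: since $v\notin\{0,1,4\}$ we have $D_P\not\le C$, and because $J$, $H$, and the trivial group are all contained in $C$, it follows that $D_P$ is conjugate-contained in none of $J$, $H$, or $\{1\}$.

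I would dispose of the cases in the order (c), (d), (e), (a), (b). For (c), if $v\notin\Im\alpha\cup\Im\beta$ then $v\notin\pi_A(\Q)=\Im\alpha$, $v\notin\pi_B(\Q)=\{1\}\cup\Im\beta$ (using $v\ne1$), and $v\notin\pi_C(\Q)$; since $A,B,C$ exhaust the maximal subgroups of $G$, no conjugate of a proper subgroup contains $D_P$, so $D_P=G$ and $G_{3,v}\equiv W$. For (d), the hypothesis $v\in\Im\alpha=\pi_A(\Q)$ gives $D_P\le A$, so up to conjugacy $D_P\in\{A,J,K,M,\{1\}\}$; the device removes $J$ and $\{1\}$, leaving $D_P\in\{A,K,M\}$. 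For (e), the hypothesis $v\in\Im\beta\subseteq\pi_B(\Q)$ gives $D_P\le B$, whose subgroups up to conjugacy are $B,M,H,\{1\}$; the device removes $H$ and $\{1\}$, leaving $D_P\in\{B,M\}$.

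Parts (a) and (b) then follow at once. For (a), $v\in\Im\mu\subseteq\pi_M(\Q)$ gives $D_P\le M$; as $M$ has prime order $3$ we get $D_P\in\{M,\{1\}\}$, and the device excludes $\{1\}$, so $D_P=M$. For (b), since $v\notin\{0,4\}$ the hypothesis $v\in\Im\kappa$ yields $v\in\{0,4\}\cup\Im\kappa=\pi_K(\Q)$, hence $D_P\le K$; again $K$ has prime order $3$, so $D_P\in\{K,\{1\}\}$, and excluding $\{1\}$ gives $D_P=K$.

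I expect no essential obstacle at the level of this proposition: all of the arithmetic difficulty---computing the rational points on $Y_A,Y_B,Y_C,Y_K$ and parametrizing $Y_M$---has already been absorbed into Lemma \ref{G3curves_auto}, and what remains is lattice bookkeeping. The one point that must be handled with care is the uniform exclusion of the trivial subgroup (together with $J$ and $H$), which otherwise would leave open the possibility that $\Phi_3(v,x)$ splits completely or has an unexpectedly small Galois group; all of these are eliminated simultaneously by the observation $D_P\not\le C$. It is also worth flagging why (d) and (e) cannot be sharpened to a single group: Lemma \ref{G3curves_auto} pins down $\pi_M(\Q)$ only through the inclusion $\pi_M(\Q)\supseteq\Im\mu$, so for $v\in\Im\alpha\setminus(\Im\kappa\cup\Im\mu)$ one cannot rule out $D_P=M$, and the inclusive conclusions are the best the method yields.
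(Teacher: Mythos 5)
Your proposal is correct and follows essentially the same route as the paper's proof: specialize via Proposition \ref{specialization_curves}, use the dictionary $D_P\le H\iff v\in\pi_H(\Q)$ together with the key fact $\pi_C(\Q)=\{0,1,4\}$ to exclude $C$ and hence $J$, $H$, and the trivial group, and then walk the subgroup lattice exactly as the paper does. Your only deviations---reordering the cases and excluding $J$, $H$, $\{1\}$ uniformly at the outset rather than case by case---are cosmetic, and your closing remark about why (d) and (e) cannot be sharpened (since Lemma \ref{G3curves_auto} gives only $\pi_M(\Q)\supseteq\Im\mu$) correctly identifies the limitation of the method.
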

\begin{proof} Throughout, we let $P$ be any prime of the splitting field $E$ lying over the prime $(t-v)$ of $\Q(t)$. By Proposition \ref{specialization_curves}, we have $G_{3,v}\equiv D_P$. Moreover, given that $\pi_C(\Q)=\{0,1,4\}$ and $v$ is not in this set, we have $D_P\not\le C$.

Suppose first that $v\in\Im\mu$. Then $v\in\pi_M(\Q)$ by Lemma \ref{G3curves_auto}, so $D_P\le M$, and we may assume $D_P\subseteq M$. We claim that $D_P=M$. The group $M$ has order 3, so its only proper subgroup is the trivial group. If $D_P=\{1\}$, then $D_P\le C$, which is a contradiction. Thus $D_P=M$ and $G_{3,v}\equiv M$, proving \eqref{C2G3autosm}. A very similar argument proves \eqref{C2G3autosk}, as $K$ also has order 3.

Suppose now that $v\notin\Im\alpha\cup\Im\beta$. Then $v\notin\pi_A(\Q)\cup\pi_B(\Q)$ by Lemma \ref{G3curves_auto}, so $D_P\not\le A$ and $D_P\not\le B$. Moreover, we have already noted that $D_P\not\le C$. Therefore, $D_P=G\equiv W$ since $A,B,C$ are the only maximal subgroups of $G$, up to conjugacy. This proves \eqref{C2G3autosw}.

Next, suppose that $v\in\Im\alpha$, so that  we may assume $D_P\subseteq A$. The maximal subgroups of $A$ are $J,K,M$, so if $D_P\ne A$, then $D_P\le J, K$, or $M$. However, since $J\le C$, the case $D_P\le J$ would lead to a contradiction. Thus $D_P\le K$ or $M$. Moreover, the proofs of \eqref{C2G3autosm} and \eqref{C2G3autosk} show that $D_P$ cannot be (conjugate to) a proper subgroup of $K$ or $M$. This proves \eqref{C2G3autosam}, and \eqref{C2G3autosbm} follows similarly, using the facts that the maximal subgroups of $B$ are $H$ and $M$, and $H\le C$.
\end{proof}

\section{The fourth dynatomic group}\label{Phi4_section}
In this section we study the groups $G_{4,\phi}$ for maps $\phi\in\Rat_2(\Q)$ having a 2-periodic critical point. Our main results are Propositions \ref{C2G4_autos} and \ref{C2G4}. As in Section \ref{Phi3_section}, we treat separately the cases where $\phi$ has trivial and nontrivial automorphism group; we begin with the latter case, which is simpler.

\subsection{Maps with nontrivial automorphisms}\label{autos_section_G4}
For maps $\psi_v$ as in \eqref{dun_maps}, we aim to classify the groups $G_{4,v}:=G_{4,\psi_v}$. Thus, we study the specializations of the fourth dynatomic Galois group of the rational map $\psi$ defined in \eqref{genpsi}.

Let $W$ denote the wreath product $(\Z/4\Z)\wr S_3$, realized as the centralizer in $S_{12}$ of the permutation $(1,2,3,4)(5,6,7,8)(9,10,11,12)$. Up to conjugacy, $W$ has a unique subgroup, denoted $V$, that is isomorphic to $(\Z/4\Z)\times S_3$. We compute that the polynomial $\Phi_4:=\Phi_{4,\psi}$ has Galois group $G\equiv V$. 

We now label several relevant subgroups of $G$. Up to conjugacy, $G$ has four maximal subgroups, which we denote by $A,B,C,D$. These groups are uniquely determined by the following properties: $A$ is cyclic of order 12; $B$ is isomorphic to the dicyclic group of order 12; $C$ is isomorphic to the dihedral group of order 12; and $D\cong (\Z/4\Z)\times(\Z/2\Z)$. The group $A$ has two maximal subgroups up to conjugacy in $G$, namely, a subgroup $H$ of order 6, and a subgroup $M$ of order 4. The group $D$ has a unique non-cyclic maximal subgroup of order 4, denoted $K$, and two maximal subgroups that are cyclic of order 4; one of them is $M$, and the other will be denoted $I$. 
The groups $M,I$ are conjugate in $S_{12}$, so $M\equiv I$. The following relations will be used throughout the proofs in this section:
\[I\le B,\; I\le D,\; M\le A,\;M\le D,\; H\le C,\; K\le C,\;M\equiv I.\]

\begin{lem}\label{G4curves_auto} With notation as above, the following hold.
\begin{enumerate}[(a)]
\item\label{G4crva} $\pi_A(\Q)=\{4\}\cup\Im\alpha$, where $\alpha(t)=4t^2/(t^2+3)$.
\item\label{G4crvb} $\pi_B(\Q)=\{4\}\cup\Im\beta$, where $\beta(t)=4t^2/(t^2+15)$.
\item\label{G4crvc} $\pi_C(\Q)=\{0,4\}$.
\item\label{G4crvcd} $\pi_D(\Q)=\Im\delta$, where $\delta(t)=(t-1)^2(t+2)$.
\item\label{G4crvci} $\pi_I(\Q)\supseteq\Im\iota$, where $\iota(t)=\beta[45(1 + t - t^2)/((2t - 1)(t^2 - t - 11))]$.
\item\label{G4crvcm} $\pi_M(\Q)\supseteq\Im\mu$, where $\mu(t)=\alpha[9t(t + 1)/((t - 1)(2 + t)(2t + 1))]$.
\end{enumerate}
\end{lem}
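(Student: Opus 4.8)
The statement to prove is Lemma~\ref{G4curves_auto}, which computes (or bounds from below) the projections $\pi_H(\Q)$ of rational points on six auxiliary curves $Y_A,Y_B,Y_C,Y_D,Y_I,Y_M$ attached to subgroups of the Galois group $G\equiv V$ of $\Phi_4$. The plan is to treat each curve according to its genus and arithmetic complexity, mirroring closely the strategy of Lemmas~\ref{G3curves} and~\ref{G3curves_auto}. For each subgroup $H$ one must first compute the defining polynomial $q_H(t,x)$ of its fixed field (obtainable, as noted in Section~\ref{spec_section}, via \cite{krumm-sutherland}*{Algorithm 3.2}), then analyze the curve $Y_H\colon q_H(t,x)=0$ and describe the image of $Y_H(\Q)$ under first-coordinate projection.

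\textbf{The genus-0 cases: $A$, $B$, $D$.}
I expect $Y_A$, $Y_B$, and $Y_D$ to be rational curves, each admitting an obvious nonsingular rational point of small height. For these the argument is the one used for $Y_A$ in Lemma~\ref{G3curves_auto}: using a chosen nonsingular point, compute a birational parametrization $\A^1\ratmap Y_H$, read off its first component to get the stated map ($\alpha$, $\beta$, or $\delta$), and verify that this map is defined at every rational value of the parameter, giving $\Im\alpha\subseteq\pi_A(\Q)$ and similarly for the others. To get the \emph{equality} claimed in parts \eqref{G4crva}--\eqref{G4crvcd}, compute an inverse map $Y_H\ratmap\A^1$ and determine the finitely many rational points at which it fails to be defined; the projections of these exceptional points contribute the extra elements $\{4\}$ (for $A$ and $B$) or show that nothing extra appears (for $D$). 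One then checks by direct substitution whether these exceptional first coordinates already lie in the image of the parametrizing map, exactly as $0,1\in\Im\alpha$ was verified in the genus-0 part of Lemma~\ref{G3curves_auto}.

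\textbf{The curve $Y_C$.}
Part \eqref{G4crvc} asserts the \emph{finite} set $\pi_C(\Q)=\{0,4\}$, which signals that $Y_C$ is not rational (or at least has only finitely many rational points). The cleanest route, following the treatment of $\pi_C$ in Lemma~\ref{G3curves_auto}, is to factor $q_C(t,x)$ over a suitable number field using \cite{fieker-sutherland}*{\textsection 6}: a rational point $(a,b)\in Y_C(\Q)$ forces one of the conjugate factors to vanish, and each vanishing condition cuts out a $0$-dimensional scheme whose rational points can be enumerated directly, yielding the first coordinates $\{0,4\}$. Alternatively, if $Y_C$ has genus $1$, one maps it to an elliptic curve, identifies the Cremona label, and checks the Mordell--Weil rank is $0$ before pulling back torsion points (as was done for $Y_B$ in Lemma~\ref{G3curves}). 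I would use whichever presentation the machine computation makes transparent.

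\textbf{The hard part: $Y_I$ and $Y_M$.}
Parts \eqref{G4crvci} and \eqref{G4crvcm} claim only containments $\pi_I(\Q)\supseteq\Im\iota$ and $\pi_M(\Q)\supseteq\Im\mu$, and the intricate form of $\iota$ and $\mu$ — each a composition of $\alpha$ or $\beta$ with a second rational function — signals that a direct nonsingular rational point of small height is \emph{not} available on $Y_I$ or $Y_M$. This is the main obstacle, and the indirect strategy of the $Y_M$ computation in Lemma~\ref{G3curves_auto} is the template. Since $I\le B$ and $I\le D$ (respectively $M\le A$ and $M\le D$), the projection $\pi_I(\Q)$ must lie in $\pi_B(\Q)\cap\pi_D(\Q)$ up to the finitely many exceptional values; so I would set the first components of the parametrizations of $Y_B$ and $Y_D$ equal, clear denominators to obtain an auxiliary correspondence curve $Y$, check that $Y$ has genus~$0$, parametrize it, and read off the inner rational function whose composition with $\beta$ produces $\iota$. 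One then substitutes $\iota(v)$ into $q_I(t,x)$, factors the resulting polynomial in $\Q(v)[x]$, exhibits a root lying in $\Q(v)$, and checks it is defined for all rational $v$, establishing the containment. The same scheme, using the pair $(A,D)$ in place of $(B,D)$, handles $\mu$ and $Y_M$. The delicate points here are confirming the genus-0 property of the correspondence curve and verifying that the chosen root of $q_I(\iota(v),x)$ (resp.\ $q_M(\mu(v),x)$) is genuinely rational in $v$ and everywhere defined; these are finite computations but constitute the technical crux of the lemma.
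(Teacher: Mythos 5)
Your proposal matches the paper's proof in all essentials: parametrize the genus-0 curves $Y_A,Y_B,Y_D$ from a nonsingular rational point, determine the finitely many points where the inverse map is undefined to account for the extra value $4$ (checking directly that $0,4\in\Im\delta$ in the $D$ case), and handle $Y_I$ and $Y_M$ indirectly via the correspondence curve $\beta(x)=\delta(y)$ (using $I\le B$, $I\le D$) and its analogue for $(A,D)$, then verify that $q_I(\iota(v),x)$ has a root in $\Q(v)$ that is everywhere defined---exactly as the paper does. The only minor divergence is at $Y_C$, where the paper uses a simpler device than either of your suggestions: $q_C$ is quadratic in $x$, so $Y_C$ is isomorphic to $y^2=\disc q_C=45(t-4)^2t^{12}$, which forces $t\in\{0,4\}$ since $45$ is not a square; your number-field factorization alternative would also succeed, so this is a difference of convenience rather than substance.
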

\begin{proof} Defining polynomials for the fixed fields of $A,B,C$ are given by
\begin{align*}
q_A &=x^2- g_A(t)\cdot f_A(t)^2,\\
q_B &= x^2-g_B(t)\cdot f_B(t)^2,\\
q_C &=x^2 + (15t^7 - 30t^6)x + 45t^{14} - 135t^{13} + 45t^{12},
\end{align*}

where
\begin{align*}
g_A(t)&=3t(4 - t),\\
g_B(t)&=15t(4 - t),\\
f_A(t) &=3t^{16}(t - 4)(t^2 - 3t + 1)(t^3 - 2t^2 + 18t - 54),\\
f_B(t) &=9t^{22}(t - 4)^2(t^2 - 3t + 1)(t^3 - 2t^2 + 18t - 54).\\
\end{align*}

\vspace{-5mm}
From the expression given for the polynomial $q_A$, it is clear that $Y_A$ is birational to the affine curve $y^2=g_A(t)$. Parametrizing the latter curve we obtain a birational map $\A^1\ratmap Y_A$ given by $v\mapsto (\alpha(v), z(v))$ for some rational function $z$. This map is defined everywhere on $\A^1(\Q)$, and its inverse is defined at every rational point different from $(0,0)$ and $(4,0)$. Now \eqref{G4crva} follows easily, and \eqref{G4crvb} is proved similarly. 

To prove \eqref{G4crvc} we note that the polynomial $q_C$ is quadratic in $x$, so $Y_C$ is isomorphic to the curve $y^2=\disc q_C=45(t - 4)^2t^{12}$,
whose only rational points are $(t,y)=(0,0)$ and $(4,0)$. Hence, $Y_C$ has two rational points, whose $t$-coordinates must be 0 and 4. Thus $\pi_C(\Q)=\{0,4\}$.

The curve $Y_D$ has genus 0. Using the nonsingular point $(2,256)\in Y_D(\Q)$ we compute a birational map $\A^1\ratmap Y_D$ whose first component is the rational function $\delta$. This map is defined everywhere on $\A^1(\Q)$, and its inverse is defined at every rational point different from $(0,0)$ and $(4,-40960)$. Part \eqref{G4crvcd} now follows by observing that $0,4\in\Im\delta$. 

To prove \eqref{G4crvci} we parametrize the curve $Y_I$ by an indirect approach using the fact that $I\le B$ and $I\le D$, and using the known parametrizations of $Y_B$ and $Y_D$. The equation $\beta(x)=\delta(y)$ defines a rational curve $Y$, and there is a birational map $\A^1\ratmap Y$ whose first component is 
\[i(v)=\frac{45(1 + v - v^2)}{(2v - 1)(v^2 - v - 11)}.\]
Defining $\iota=\beta\circ i$, we find that the polynomial $q_I(\iota(v),x)\in\Q(v)[x]$ has two roots in $\Q(v)$; choosing a root $p$, we obtain the desired parametrization $\A^1\ratmap Y_I$ given by $v \mapsto (\iota(v),p(v))$. Finally, we check that this map is defined everywhere on $\A^1(\Q)$, from which \eqref{G4crvci} follows. The proof of \eqref{G4crvcm} is very similar, using the parametrizations of $Y_A$ and $Y_D$, and the fact that $M\le A$ and $M\le D$.
\end{proof}

\begin{prop}\label{C2G4_autos}
The following statements hold for $v\in\Q\setminus\{0,3/2,4\}$.
\begin{enumerate}[(a)]
\item\label{C2G4autosi} If $v\in\Im\iota$, then $G_{4,v}\equiv I$.
\item\label{C2G4autosm} If $v\in\Im\mu$, then $G_{4,v}\equiv M\equiv I$.
\item\label{C2G4autosabd}If $v\notin\Im\alpha\cup\Im\beta\cup\Im\delta$, then $G_{4,v}\equiv V$.
\item\label{C2G4autosam} If $v\in\Im\alpha$, then $G_{4,v}\equiv A$ or $I$.
\item\label{C2G4autosbi} If $v\in\Im\beta$, then $G_{4,v}\equiv B$ or $I$.
\item\label{C2G4autosdjm} If $v\in\Im\delta$, then $G_{4,v}\equiv D$ or $I$.
\end{enumerate}
\end{prop}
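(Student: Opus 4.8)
The plan is to follow the template of Proposition~\ref{C2G3_autos}: combine Proposition~\ref{specialization_curves} with the rational-point computations of Lemma~\ref{G4curves_auto} and the subgroup lattice of $G\equiv V$. For $v\in\Q\setminus\{0,3/2,4\}$ I fix a prime $P$ of $E$ over $(t-v)$; since these three excluded values are exactly those at which the relevant discriminants vanish, Proposition~\ref{specialization_curves} gives $G_{4,v}\equiv D_P$, and the task becomes pinning down which subgroup of $G$ the group $D_P$ is. The observation that drives every case is that $\pi_C(\Q)=\{0,4\}$ by Lemma~\ref{G4curves_auto}\eqref{G4crvc}, so that $v\notin\{0,4\}$ already forces $D_P\not\le C$; each case is then closed by showing that any unwanted possibility for $D_P$ would yield $D_P\le C$.

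The one genuinely new group-theoretic ingredient, compared with the period-$3$ argument, is a ``floor'' statement: \emph{every nontrivial proper subgroup of $M$ or of $I$ is $\le C$}. Indeed, $M$ and $I$ are cyclic of order $4$ inside $D\cong(\Z/4\Z)\times(\Z/2\Z)$, so the unique order-$2$ subgroup of each lies in the $2$-torsion subgroup of $D$, which is exactly the Klein-four group $K$; since $K\le C$, the claim follows (the trivial subgroup being $\le C$ vacuously). This replaces the simpler fact used for period $3$, where the analogous group had prime order and hence no nontrivial proper subgroup to rule out. I would verify the floor statement, and all other lattice facts below, directly from the subgroup lattice of $V$.

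Granting this, the ``bottom'' cases are immediate. If $v\in\Im\iota$ then $v\in\pi_I(\Q)$ by Lemma~\ref{G4curves_auto}\eqref{G4crvci}, so $D_P\le I$ and we may take $D_P\subseteq I$; the floor statement forbids $D_P$ from being a proper subgroup of $I$, so $D_P=I$, proving \eqref{C2G4autosi}. The case $v\in\Im\mu$ is identical with $M$ in place of $I$, yielding $D_P=M\equiv I$ and hence \eqref{C2G4autosm}. For \eqref{C2G4autosabd}, the hypothesis $v\notin\Im\alpha\cup\Im\beta\cup\Im\delta$ together with $v\notin\{0,4\}$ and the descriptions of $\pi_A,\pi_B,\pi_D$ in Lemma~\ref{G4curves_auto} gives $v\notin\pi_A(\Q)\cup\pi_B(\Q)\cup\pi_C(\Q)\cup\pi_D(\Q)$; thus $D_P$ lies in none of the four maximal subgroups $A,B,C,D$, forcing $D_P=G\equiv V$.

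The remaining parts are the ``single maximal subgroup'' cases, handled uniformly by descending one level in the lattice and invoking $H\le C$, $K\le C$, and the floor statement. For $v\in\Im\alpha$ we get $D_P\subseteq A$; if $D_P\ne A$ it sits in a maximal subgroup of $A$, namely $H$ or $M$, and $H\le C$ rules out $H$ while the floor statement collapses the $M$-case to $D_P=M\equiv I$, giving \eqref{C2G4autosam}. For $v\in\Im\beta$ I would first record (a routine lattice computation) that the maximal subgroups of $B$ are, up to conjugacy, the order-$6$ subgroup $H$ and the order-$4$ subgroup $I$; then $H\le C$ and the floor statement leave only $D_P=B$ or $D_P=I$, which is \eqref{C2G4autosbi}. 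For $v\in\Im\delta$ we have $D_P\subseteq D$, whose maximal subgroups are $K,M,I$; the relation $K\le C$ eliminates $K$ and the floor statement collapses the $M$- and $I$-cases to $D_P\equiv I$, so that the only surviving possibilities are $D_P=D$ and $D_P\equiv I$, that is, $G_{4,v}\equiv D$ or $I$, proving \eqref{C2G4autosdjm}. The main obstacle here is not the group theory, which is a finite lattice check, but securing the sharp input $\pi_C(\Q)=\{0,4\}$ and the parametrizations $\Im\iota\subseteq\pi_I(\Q)$ and $\Im\mu\subseteq\pi_M(\Q)$ of Lemma~\ref{G4curves_auto}; within this proof the only delicate point is the floor statement for the order-$4$ groups $M,I$ together with the identification of the maximal subgroups of $B$.
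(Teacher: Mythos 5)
Your proposal is correct and follows essentially the same route as the paper: specialize via Proposition \ref{specialization_curves}, use $\pi_C(\Q)=\{0,4\}$ to force $D_P\not\le C$, and close each case using the relations $H\le C$, $K\le C$, $M\equiv I$ together with the maximal-subgroup lists of $A$, $B$, $D$ and of the order-$4$ groups. The only (harmless) difference is that your ``floor'' statement derives structurally what the paper records as a lattice computation---namely that the unique maximal subgroup of each of the cyclic order-$4$ groups $M,I$ is $\le C$---by noting it lies in the $2$-torsion subgroup $K$ of $D$ and $K\le C$.
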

\begin{proof} For the polynomials $\Phi_4,q_A,q_B,\ldots, q_M$, the only rational roots of the discriminants are the excluded values $0,3/2,4$. Let $P$ be a prime of the splitting field of $\Phi_4$ lying over the prime $(t-v)\subseteq\Q(t)$. By Lemma \ref{G4curves_auto} we have $D_P\not\le C$; moreover, since $H\le C$ and $K\le C$, then $D_P\not\le H$ and $D_P\not\le K$.

If $v\in\Im\iota$, then we may assume $D_P\subseteq I$. The group $I$ has a unique maximal ideal $J$, and $J\le C$. It follows that $D_P=I$, proving \eqref{C2G4autosi}. A similar argument proves \eqref{C2G4autosm}, as $J$ is also the unique maximal ideal of $M$. Part \eqref{C2G4autosabd} follows from the fact that $A,B,C,D$ are all the maximal subgroups of $G$.

If $v\in\Im\alpha$, we may assume $D_P\subseteq A$. The maximal subgroups of $A$ are $H$ and $M$, and we have noted that $D_P$ cannot be a proper subgroup of $M$. Thus, if $D_P\ne A$ and $D_P\ne M$, then $D_P\subseteq H$, a contradiction. Hence $D_P=A$ or $M$; in the latter case, $D_P\equiv I$. This proves \eqref{C2G4autosam}. Similar reasoning proves \eqref{C2G4autosbi} and \eqref{C2G4autosdjm}, using the fact that the maximal subgroups of $B$ are $H,I$, and the maximal subgroups of $D$ are $M,K,I$.
\end{proof}

\subsection{Maps with trivial automorphism group}\label{trivial_section_G4} 
For maps of the form $\phi_v$ as in \eqref{no_auto_maps}, we study here the dynatomic groups $G_{4,v}:=G_{4,\phi_v}$. Unlike in previous sections, a complete classification of these groups is precluded by the appearance of several curves of high genus whose rational point sets we are unable to determine. Our goal is instead to construct a list of groups, as short as possible, that is known to contain all groups of the form $G_{4,v}$. Our main result in this direction is Proposition \ref{C2G4}. The technique of Galois group specialization is once again the main tool used, with one new aspect being the use of the method of Chabauty and Coleman to determine rational points on curves. A combination of this method with a Mordell--Weil sieve is implemented in \textsc{Magma}, and applies to curves of genus 2 whose Jacobian has Mordell--Weil rank 1; see \cite{bruin-stoll}*{\textsection 4.4} for details.

Let $W=(\Z/4\Z)\wr S_3$. With $\phi$ as in \eqref{genphi}, let $\Phi_4=\Phi_{4,\phi}$ and let $G$ be the Galois group of $\Phi_4$. We compute that $G\equiv W$. Up to conjugacy, $G$ has 164 subgroups, all of which could \emph{a priori} occur as a dynatomic group $G_{4,v}$. Our analysis in this section will narrow down the possibilities to 26 groups, 10 of which are known to be realized in the form $G_{4,v}$. Despite the incompleteness of this classification, the data provided by the above list of 26 groups is sufficient for deriving our main applications in Section \ref{applications_section}.

We begin by labeling the relevant subgroups of $G$. When convenient, we use the identification scheme of the \textsc{Small groups} library \cite{smallgps}, in which groups are identified by a pair $(o,n)$, where $o$ is the order of the group.

The group $G$ has five maximal subgroups up to conjugacy, labeled $M_i$ for $1\le i\le 5$; these are uniquely determined by the following properties:
\[M_1\cong(192,944); |M_2|=128; |M_3|=96; M_4\cong (192,188); M_5\cong(192,182).\]

The group $M_1$ has five maximal subgroups up to conjugacy in $G$; these are labeled $A_i$ according to the following properties:
\[|A_1|=48;\;|A_2|=64;\;A_3\cong(96,68).\]
The groups $A_4$ and $A_5$ are both isomorphic to $(96,64)$, but $A_4$ contains an element whose disjoint cycle decomposition is a product of six 2-cycles, and $A_5$ does not contain such an element.

The group $M_2$ has seven maximal subgroups labeled as follows:

\begin{itemize}
\item $B_2\cong(64,198)$, $B_3\cong(64,85)$, $B_5\cong(64,55)$;
\item $B_1\cong(64,101)$ and does not have an element with cycle decomposition of the form $(8\text{-cycle})(4\text{-cycle})$;
\item $B_4\cong (64,20)$ and does not have an element with cycle decomposition of the form $(8\text{-cycle})(4\text{-cycle})$;
\item $B_6\cong(64,20)$ and has an element with cycle decomposition of the form $(8\text{-cycle})(4\text{-cycle})$;
\item $B_7\cong(64,101)$ and has an element with cycle decomposition of the form $(8\text{-cycle})(4\text{-cycle})$.
\end{itemize}

We label two maximal subgroups of $M_3$, uniquely determined by the conditions $|C_1|=24$ and $|C_2|=32$. Finally, we label one maximal subgroup $J$ of $B_1$, and one maximal subgroup $K$ of $B_2$, as follows:

\begin{itemize}
\item $J\cong (32,11)$, does not have an element with cycle decomposition of the form $(8\text{-cycle})(2\text{-cycle})(2\text{-cycle})$, and does not have an element whose disjoint cycle decomposition is a product of six 2-cycles.
\item $K\cong (32, 25)$, contains a 4-cycle and an element with cycle decomposition of the form $(4\text{-cycle})(2\text{-cycle})(2\text{-cycle})$.
\end{itemize}

For each subgroup $H$ of $G$ that was labeled above, we compute a defining polynomial $q_H$ for the fixed field of $H$, and we find the rational roots of $\disc q_H$. The rational numbers thus obtained are all included in the set
\[\Delta:=\{0,1,3/2,8/3,11/2\}.\]

The following collections of subgroups of $G$ will be central to our analysis:
\begin{align*}
\calF&:=\{M_4, M_5, A_2 ,A_3, A_4, A_5, B_3, B_4, B_6, B_7,J\},\\
\calR&:=\{G,M_1,M_2,M_3,A_1,B_1,B_2,C_1,C_2,K\}.
\end{align*}

\begin{lem}\label{C2G4_almost_all}
For every $H\in\calR$ there exists $v\in\Q\setminus\{0\}$ such that $G_{4,v}\equiv H$.
\end{lem}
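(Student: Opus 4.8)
The plan is to prove Lemma \ref{C2G4_almost_all} by explicitly exhibiting, for each of the ten groups $H\in\calR$, at least one specialization value $v\in\Q\setminus\{0\}$ realizing $G_{4,v}\equiv H$, and then verifying the claim at each such $v$. The overall strategy mirrors the method already established in Sections \ref{trivial_section_G3} and \ref{autos_section_G3}: combine Proposition \ref{specialization_curves} with the containment relations among subgroups of $G$ encoded by the curves $Y_H$. However, since $\calR$ consists precisely of the groups we \emph{can} realize, the logic runs in the opposite direction from the impossibility arguments: rather than ruling out all proper subgroups, I want to pin down the decomposition group $D_P$ exactly by sandwiching it between a known containment (from a rational point producing $D_P\le H$) and a separation from the maximal subgroups of $H$ (from the \emph{absence} of suitable rational points at that same $v$).

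First I would, for each $H\in\calR$, produce a concrete candidate value $v_H\in\Q\setminus\{0\}$. For the full group $G\equiv W$ this is automatic from Hilbert irreducibility: almost every $v$ works, and I would simply name one avoiding the bad set $\Delta$ and avoiding the projections $\pi_{M_i}(\Q)$. For the remaining nine groups, the candidate $v_H$ should be a value at which $q_H(v_H,x)$ has a rational root (so that $D_P\le H$ by Proposition \ref{specialization_curves}(2)), obtained either from an evident rational point on $Y_H$ or by substituting into a parametrization of $Y_H$ when one exists. For the smaller groups deeper in the lattice (such as $C_1$, $C_2$, $K$) I would locate a rational point on the relevant curve directly; for the larger maximal subgroups $M_1,M_2,M_3$ and the second-level groups $A_1,B_1,B_2$, the defining polynomials $q_H$ have low degree in $x$ and their curves should admit small-height rational points found by a bounded search.

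Next, at each chosen $v_H$, I would confirm that $D_P$ equals $H$ rather than some proper subgroup. Having arranged $D_P\le H$, it suffices to check that $v_H\notin\pi_{H'}(\Q)$ for every maximal subgroup $H'$ of $H$ (up to conjugacy), since any proper $D_P$ would be contained in such an $H'$ and would force a rational root of $q_{H'}(v_H,x)$. Concretely, this reduces to evaluating the finitely many polynomials $q_{H'}(v_H,x)$ and verifying they have no rational root---a routine factorization check over $\Q$. The cleanest packaging is to present the data as a table listing, for each $H\in\calR$, the value $v_H$, a witnessing rational root of $q_H(v_H,x)$, and the verification that the maximal subgroups of $H$ contribute no rational roots at $v_H$; all of these are finite computations carried out in \textsc{Magma} per the conventions stated in the introduction.

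The main obstacle is bookkeeping rather than mathematical depth: the subgroup lattice of $W=(\Z/4\Z)\wr S_3$ is large (164 conjugacy classes of subgroups), so for each target $H$ I must correctly identify its maximal subgroups within $G$ and ensure the separation check covers all conjugacy classes of them. A subtlety is that two candidate subgroups can be abstractly isomorphic yet inequivalent as permutation groups (the paper's distinctions between $A_4,A_5$ and among the $B_i$ via cycle-type data illustrate this), so the containment relations $D_P\le H'$ must be verified at the level of the labeled permutation subgroups, not merely abstract isomorphism types. Provided the labeling is handled carefully and the parametrizations or point searches succeed in producing the witnesses $v_H$---which I expect, since $\calR$ was chosen precisely as the realizable groups---the verification at each $v_H$ is then a direct application of Proposition \ref{specialization_curves} together with a finite factorization check, completing the proof.
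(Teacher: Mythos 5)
Your witness-hunting strategy (Hilbert irreducibility for $G$, small-height point searches on the curves $Y_H$ for the rest) is exactly the paper's, but your verification step contains a genuine gap. You propose to pin down $D_P$ by the sandwich argument: $q_H(v_H,x)$ has a rational root (giving $D_P\le H$ via Proposition \ref{specialization_curves}) while $q_{H'}(v_H,x)$ has none for each maximal subgroup $H'$ of $H$. Both directions of that argument require $v_H$ to avoid the rational roots of $\disc\Phi_4$ and of the relevant $\disc q_{H}$, $\disc q_{H'}$ --- and for several groups in $\calR$ the only available witnesses lie precisely in the bad set $\Delta=\{0,1,3/2,8/3,11/2\}$. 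The decisive case is $K$: Lemma \ref{G4curves} establishes $\pi_K(\Q)=\{0,8/3\}$, so the only nonzero value at which $q_K(v,x)$ has a rational root is $v=8/3$, which lies in $\Delta$, i.e., among the rational roots of the discriminants of the labeled $q$-polynomials. Hence there is \emph{no} admissible witness for $K$ within your framework, and the sandwich argument can never be run for it. The same problem afflicts the paper's actual witnesses for $M_1,M_2,M_3$ (namely $v=11/2$, $1$, $3/2$, all in $\Delta$); at minimum you would need replacement points on those curves outside $\Delta$, with no guarantee such points exist.

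The paper's proof sidesteps this entirely: having located a candidate $v$ by point search on $Y_H$, it verifies $G_{4,v}\equiv H$ by \emph{directly} computing the Galois group of the specialized polynomial $\Phi_4(v,x)$ over $\Q$ using standard algorithms for rational polynomials (\cite{fieker-kluners}, implemented in \textsc{Magma}) and comparing group actions. This works for arbitrary $v$, including roots of the discriminants, and requires no fixed-field data $q_{H'}$ for maximal subgroups; it is also how the values $v\in\Delta\setminus\{0\}$ are handled in the proof of Proposition \ref{C2G4}. To repair your proposal, keep the point search but replace the decomposition-group separation step with this direct Galois group computation at each chosen $v_H$; your sandwich method remains valid only as a supplementary tool at parameters known to avoid all relevant discriminant roots.
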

\begin{proof}
For $H=G$, the result follows from the Hilbert irreducibility theorem. For the remaining groups $H\in\calR$, we carry out a search for rational points of small height on the curve $Y_H$ in order to find elements $v\in\pi_H(\Q)$; we then test these values of $v$ to check the condition $G_{4,v}\equiv H$. In this way we obtain the following pairs $(v,H)$:
\begin{align*}
&(11/2,M_1),\;(1,M_2),\;(3/2,M_3),\;(25/6,A_1),\\
 &(1/6,B_1),\;(27/7,B_2),\;(5,C_1),\;(125/21,C_2),\;(8/3,K).\qedhere
 \end{align*}
\end{proof}

Lemma \ref{C2G4_almost_all} shows that groups in $\calR$ are realizable as dynatomic groups $G_{4,v}$. The next lemma implies that groups in $\calF$ cannot be realized as such.

\begin{lem}\label{G4curves}
For every group $H\in\calF$ we have $\pi_H(\Q)\subseteq\{0,1\}$. Moreover, $\pi_K(\Q)=\{0,8/3\}$.
\end{lem}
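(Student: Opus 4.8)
The plan is to establish, for each group $H\in\calF$ together with $K$, that the curve $Y_H$ has only finitely many rational points and that their $t$-coordinates lie in $\{0,1\}$ (respectively $\{0,8/3\}$ for $K$). Since each $q_H$ defines a plane curve, the first step is to compute the genus of each $Y_H$ and then apply an appropriate rational-point-finding technique depending on that genus. For curves $Y_H$ of genus $0$ or $1$, the method is exactly as in the proofs of Lemmas \ref{G3curves}, \ref{G3curves_auto}, and \ref{G4curves_auto}: parametrize or map to an elliptic curve in Weierstrass form, compute the Mordell--Weil group (or show the rank is $0$), and pull back the finitely many rational points. The genuinely new feature here, flagged in the preamble to this subsection, is that several of these curves will have genus $\ge 2$, so Faltings guarantees finiteness but does not produce the points; for those I would invoke the Chabauty--Coleman method combined with a Mordell--Weil sieve, as implemented in \textsc{Magma} following \cite{bruin-stoll}, which applies precisely to genus-$2$ curves whose Jacobian has Mordell--Weil rank $1$.

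The key steps, carried out group by group, are as follows. First I would compute each defining polynomial $q_H$ (already available from the fixed-field computations) and determine the genus of $Y_H$. For the low-genus cases I would proceed as in the earlier lemmas, recording in each case that the $t$-coordinates of the rational points all lie in $\{0,1\}$. For the high-genus cases I would reduce $Y_H$ to a model suitable for the Chabauty--Coleman routine: find a degree-$2$ map to a curve of genus $2$ if $Y_H$ itself is not already of that form, verify that the relevant Jacobian has rank exactly $1$ (the hypothesis the implemented method requires), and then run the Chabauty--Coleman plus Mordell--Weil sieve computation to obtain the complete list of rational points. In every case the conclusion to extract is only the projection onto the $t$-coordinate, so even when $Y_H(\Q)$ contains points with nonzero first coordinate I need to confirm that the first coordinate is always $0$ or $1$. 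For $K$ the claim is sharper, an equality rather than a containment, so here I would both exhibit the point with $t=8/3$ (consistent with the pair $(8/3,K)$ found in Lemma \ref{C2G4_almost_all}) and show via the point count that no other $t$-coordinate besides $0$ occurs.

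The main obstacle I anticipate is the verification of the rank-$1$ hypothesis and the successful termination of the Chabauty--Coleman--sieve computation for the genus-$2$ curves in $\calF$. The implemented method fails outright if the Jacobian rank is not $1$, so if some $Y_H$ presents a Jacobian of rank $0$ or $\ge 2$ I would need an alternative: a rank-$0$ Jacobian actually makes the problem easier (classical Chabauty or direct descent suffices), but a rank-$\ge 2$ Jacobian would force a different strategy entirely, such as covering collections, elliptic-curve Chabauty after passing to an intermediate cover, or finding auxiliary maps to lower-genus quotients whose rational points are computable. A secondary difficulty is that the Chabauty--Coleman output certifies only the points the sieve cannot rule out; one must confirm that the provably complete list has been attained, which in practice means choosing good auxiliary primes for the Mordell--Weil sieve so that the sieve information cuts the set down to exactly the known rational points. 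Once all of these point computations succeed, the stated containments $\pi_H(\Q)\subseteq\{0,1\}$ and the equality $\pi_K(\Q)=\{0,8/3\}$ follow immediately by projecting the finite rational-point sets onto the first coordinate.
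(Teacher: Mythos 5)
Your overall architecture --- compute the genus of each $Y_H$, apply a genus-appropriate point-finding method, and project onto the $t$-coordinate --- is the paper's, and your treatment of the genus-$2$ cases matches it exactly: the paper handles $Y_{A_2},Y_{A_4},Y_{B_7}$ and $Y_K$ by Chabauty--Coleman with a Mordell--Weil sieve (\textsc{Magma}'s \texttt{Chabauty0} and \texttt{Chabauty}, the Jacobians having rank $0$ or $1$), and the equality $\pi_K(\Q)=\{0,8/3\}$ falls out of that computation as you describe. The genuine gap is in your low-genus plan. You assert that the genus-$0$ and genus-$1$ cases go ``exactly as in'' Lemmas \ref{G3curves}, \ref{G3curves_auto}, and \ref{G4curves_auto}, but every argument in those lemmas is anchored by an explicit nonsingular rational point --- used to parametrize a rational curve or to put a genus-$1$ curve in Weierstrass form over $\Q$ --- and for the curves in $\calF$ no such point exists; the whole content of the lemma is that these curves have (almost) no rational points. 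For $H\in\{M_4,A_5,J\}$ (genus $0$) the paper maps $Y_H$ to a conic and proves by a local argument that the conic has no rational point, so the only rational points of $Y_H$ are the finitely many where that map is undefined. Your plan would stall at the nonexistent base point; note also that finiteness of $Y_H(\Q)$ is not automatic in genus $0$ --- Faltings says nothing here, and it is precisely the local insolubility of the smooth model that delivers the conclusion. For $H\in\{M_5,B_6\}$ (genus $1$) the paper's workaround is a base change: a point is found over $L=\Q(\sqrt{-1})$, $Y_H$ is mapped to an elliptic curve over $L$, the rank is shown to be $0$ over $L$, and $Y_H(\Q)$ is extracted from $Y_H(L)$ --- a step your Weierstrass-over-$\Q$ plan cannot reach without a rational point on $Y_H$.

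A secondary divergence, more a simplification than a gap: for $H\in\{B_3,B_4\}$ (hyperelliptic of genus $2$ or $3$) and for $H=A_3$ the paper avoids Chabauty entirely and shows the relevant curves fail to have points everywhere locally (via \texttt{HasPointsEverywhereLocally}, resp.\ a map to a genus-$1$ hyperelliptic curve with no rational points). This sidesteps the rank computations your plan would require, and in the genus-$3$ case the degree-$2$ map onto a genus-$2$ curve that you propose need not exist. Your anticipated fallback of mapping to lower-genus quotients is the right instinct for $A_3$, but local solvability tests should be built into the plan as a first pass for every curve: they settle five of the twelve cases outright and are exactly what rescues the cases where your point-anchored methods fail.
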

\begin{proof}
For $H\in\calF$ we determine the set of rational points $Y_H(\Q)$, and in particular the set $\pi_H(\Q)$, using the following observations.

For $H\in\{M_4,A_5,J\}$, the curve $Y_H$ has genus 0. We compute a birational map $Y_H\ratmap C$, where $C$ is a conic. The existence of a rational point on $C$ can be tested by local means; in all three cases, we find that $C$ has no rational point. Thus, the only rational points on $Y_H$ are those where the computed rational map to $C$ is undefined.
 
For $H\in\{M_5,B_6\}$, $Y_H$ has genus 1. A search for nonsingular points of small degree yields a point defined over the quadratic field $L=\Q(\sqrt {-1})$. We may thus compute a birational map $Y_H\ratmap E$ defined over $L$, where $E$ is an elliptic curve. We find that $E$ has rank 0 over $L$; computing the set $E(L)$ and pulling back the points in $E(L)$, we determine the set $Y_H(L)$ and therefore $Y_H(\Q)$.
 
 For $H\in\{A_2,A_4,B_7,K\}$, $Y_H$ is birational to a curve of genus 2 and Mordell--Weil rank 0 or 1. The \textsc{Magma} functions \texttt{Chabauty0} and \texttt{Chabauty} successfully compute the set $Y_H(\Q)$.
 
For $H\in\{B_3,B_4\}$, $Y_H$ is birational to a hyperelliptic curve $C$ of genus 2 or 3. Using the methods of \cite{bruin}*{\textsection 5.5}, implemented in \textsc{Magma} via the function \texttt{HasPointsEverywhereLocally}, we show that $C$ no rational point. Finally, for $H=A_3$, $Y_H$ admits a rational map to a hyperelliptic curve of genus 1 with no rational point, again proved by a local argument.
\end{proof}

\begin{lem}\label{G4ab_lem} Up to conjugacy, there are $16$ subgroups of $G$ that do not belong to $\calR$ and are not contained in any group in $\calF\cup\{K\}$.
\end{lem}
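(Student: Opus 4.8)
The plan is to reduce this to a finite combinatorial computation on the subgroup lattice of $G$, which was already computed (the excerpt states $G$ has 164 subgroups up to conjugacy). The statement to prove is purely group-theoretic: count the conjugacy classes of subgroups $H\le G$ satisfying two conditions simultaneously, namely (i) $H$ is not conjugate to any group in $\calR=\{G,M_1,M_2,M_3,A_1,B_1,B_2,C_1,C_2,K\}$, and (ii) $H$ is not contained in a conjugate of any group in $\calF\cup\{K\}=\{M_4,M_5,A_2,A_3,A_4,A_5,B_3,B_4,B_6,B_7,J,K\}$. The claim is that exactly $16$ conjugacy classes survive.

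First I would enumerate the $164$ conjugacy classes of subgroups of $G$, using the same lattice computation already referenced in the labeling of the $M_i$, $A_i$, $B_i$, $C_i$, $J$, $K$. Next, for each class I would mark those lying in $\calR$ (removing $10$ classes), and then compute, via the containment relation $\le$ (containment up to $G$-conjugacy, as defined in the Notation block), which remaining classes are contained in at least one member of $\calF\cup\{K\}$. Concretely, for each $H$ not in $\calR$ one tests whether $H\le F$ for some $F\in\calF\cup\{K\}$; any $H$ passing this test is discarded. The subgroups left after both filters are precisely those counted by the lemma, and the content of the proof is simply that this count equals $16$.

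The only genuine subtlety — and the step I expect to be the main obstacle to a \emph{human} verification — is bookkeeping: correctly identifying each of the $164$ classes, since several distinguished subgroups (for instance $A_4$ versus $A_5$, or $B_4$ versus $B_6$, or $B_1$ versus $B_7$) are isomorphic as abstract groups and are distinguished only by the cycle types of their elements as permutations in $S_{12}$. Thus the containment tests must be carried out at the level of permutation subgroups of $W=(\Z/4\Z)\wr S_3$, not merely by abstract isomorphism type. Since all of this is a finite, deterministic computation in the subgroup lattice of a group of known order, it is best discharged by the computer-algebra routines already invoked elsewhere in the paper (the lattice and the $\le$-relation are computable in \textsc{Magma}), and the proof amounts to recording the output of that computation.

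Accordingly, the proof I would write is short: cite the explicit subgroup lattice of $G$, apply the two filters described above as a mechanical check over all $164$ classes, and report that exactly $16$ conjugacy classes remain; the verification is included in the accompanying \textsc{Magma} code for the relevant subsection. The substance of the lemma is not a deep argument but the guarantee that the subsequent case analysis (narrowing $164$ \emph{a priori} possibilities down to the stated $26$ groups) is exhaustive, so the emphasis in the write-up is on correctness and completeness of the enumeration rather than on any single clever step.
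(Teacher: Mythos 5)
Your proposal is correct and is essentially the paper's own proof: the paper disposes of this lemma in one sentence, saying it follows ``by computing the lattice of conjugacy classes of subgroups of $G$ and traversing the lattice to verify the given statement,'' which is exactly the two-filter mechanical check over the $164$ classes (exclusion of $\calR$, then exclusion of anything contained, up to conjugacy, in a member of $\calF\cup\{K\}$) that you describe and delegate to \textsc{Magma}. Your added caution about performing containment tests at the level of permutation subgroups rather than abstract isomorphism type is a sound implementation remark but not a departure from the paper's method.
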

\begin{proof}
This can be proved by computing the lattice of conjugacy classes of subgroups of $G$ and traversing the lattice to verify the given statement.
\end{proof}

Let $\calU$ denote the set of 16 groups referred to in Lemma \ref{G4ab_lem}.

\begin{prop}\label{C2G4}
For every $v\in\Q\setminus\{0\}$ we have $G_{4,v}\equiv H$ for some group $H$ in the set $\calP:=\calR\cup\calU$.
\end{prop}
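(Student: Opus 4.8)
The plan is to prove that every realizable group $G_{4,v}$ (for $v \neq 0$) is conjugate to a member of $\calP = \calR \cup \calU$ by combining the Galois specialization framework of Proposition \ref{specialization_curves} with the rational-point computations already assembled in Lemmas \ref{C2G4_almost_all}, \ref{G4curves}, and \ref{G4ab_lem}. Fix $v \in \Q\setminus\{0\}$ and let $P$ be a prime of the splitting field of $\Phi_4$ lying over $(t-v)$. By Proposition \ref{specialization_curves} we have $G_{4,v} \equiv D_P$, so it suffices to show that $D_P$ is conjugate in $G$ to a group in $\calP$. The strategy is to argue by a descent through the subgroup lattice: either $D_P = G$ (in which case $G \in \calR$ and we are done), or $D_P$ is contained in a conjugate of some maximal subgroup of $G$, and we repeatedly push down until we either land in $\calR$, land in $\calU$, or are forced into a contradiction.

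Concretely, I would first dispose of the finitely many exceptional values: since the rational roots of the discriminants of $\Phi_4$ and all the relevant $q_H$ lie in $\Delta = \{0,1,3/2,8/3,11/2\}$, one computes $G_{4,v}$ directly (via a Galois group algorithm) for each such $v \neq 0$ and checks membership in $\calP$. For $v \notin \Delta$, Proposition \ref{specialization_curves}(2) applies cleanly, so the containment $D_P \le H$ is detected exactly by whether $v \in \pi_H(\Q)$. The heart of the argument is then purely combinatorial: I traverse the lattice of conjugacy classes of subgroups of $G$. By Lemma \ref{G4curves}, for every $H \in \calF$ we have $\pi_H(\Q) \subseteq \{0,1\}$, and $\pi_K(\Q) = \{0,8/3\}$; hence for $v \notin \{0,1,8/3\}$ the decomposition group $D_P$ can be contained in no conjugate of any group in $\calF \cup \{K\}$. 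By Lemma \ref{G4ab_lem}, the only subgroups of $G$ that avoid being contained in $\calF \cup \{K\}$ and do not already lie in $\calR$ are precisely the $16$ groups of $\calU$; therefore $D_P$ must be conjugate to a member of $\calR \cup \calU = \calP$.

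The one subtlety to handle carefully is the interaction between the combinatorial lattice argument and the membership in $\calP$ when $v \in \{1, 8/3\}$, since these values do appear as first coordinates of rational points on curves $Y_H$ with $H \in \calF$ (namely via the point $\{0,1\}$ in Lemma \ref{G4curves}) or on $Y_K$. For $v = 8/3$, Lemma \ref{C2G4_almost_all} already exhibits $G_{4,8/3} \equiv K$, and $K \in \calR$, so this value is accounted for directly. For $v = 1$, I would simply invoke the direct computation from the $\Delta$-case above, which shows $G_{4,1} \equiv M_2 \in \calR$. Thus the finitely many values at which the clean lattice descent fails are exactly those covered by explicit computation, and every remaining $v$ is governed by the generic argument.

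The main obstacle is ensuring that the lattice traversal in Lemma \ref{G4ab_lem} is genuinely exhaustive: one must verify that every one of the $164$ conjugacy classes of subgroups of $G$ is either in $\calR$, in $\calU$, or contained in some conjugate of a group in $\calF \cup \{K\}$, with no class slipping through. This is a finite but delicate computation over a group of order $4^3 \cdot 6 = 384$, and the logical content of the proof rests entirely on its correctness; I would therefore present it as a verified machine computation (with the accompanying \textsc{Magma} code), emphasizing that the containment relation $\le$ used throughout is containment up to $G$-conjugacy, matching the hypothesis of Proposition \ref{specialization_curves}(2). Once the lattice claim is granted, the deduction of Proposition \ref{C2G4} is immediate from the three preceding lemmas.
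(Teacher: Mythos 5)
Your proposal is correct and follows essentially the same route as the paper: direct computation of $G_{4,v}$ for the exceptional values $v\in\Delta\setminus\{0\}$ (which yield groups in $\calR$), then for $v\notin\Delta$ the identification $G_{4,v}\equiv D_P$ via Proposition \ref{specialization_curves}, the exclusion of containment in any conjugate of a group in $\calF\cup\{K\}$ via Lemma \ref{G4curves}, and the conclusion via the lattice computation of Lemma \ref{G4ab_lem}. Your extra care about $v\in\{1,8/3\}$ is subsumed by the $\Delta$-case exactly as in the paper's proof, so nothing further is needed.
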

\begin{proof}
 Computing $G_{4,v}$ for $v\in\Delta\setminus\{0\}$, we obtain the groups $M_1,M_2,K\in\calR$. Now let $v\in\Q\setminus\Delta$, and let $P$ be a prime of the splitting field of $\Phi_4$ lying over the prime $(t-v)$ of $\Q(t)$, so that $G_{4,v}\equiv D_P$. Since $v\notin\Delta$, Lemma \ref{G4curves} implies that $D_P$ is not contained in any group in $\calF\cup\{K\}$. Thus, by Lemma \ref{G4ab_lem}, $D_P$ must be conjugate to some group in $\calR\cup\calU$.
\end{proof}

\begin{rem}
The groups in the set $\calU$ are those whose realizability in the form $G_{4,v}$ is unknown; in particular, a search for rational points on the curves $Y_H$ for $H\in\calU$ failed to find $v$ such that $G_{4,v}\equiv H$. Moreover, we were unable to solve the problem of determining all rational points on $Y_H$ for $H\in\calU$ by standard means. As an example, two of the groups in $\calU$ are contained in the group $C_1$, so their corresponding curves $Y_H$ have dominant rational maps onto $Y_{C_1}$. We are thus led to the problem of finding all rational points on $Y_{C_1}$, a geometrically irreducible curve of genus 8 for which we  do not know basic information such as its automorphism group or whether it is hyperelliptic, due to the excessive computing time required. Nevertheless, a computation in \textsc{Magma} shows that every curve $Y_H$ with $H\in\calU$ has genus greater than 1, so that $H$ can occur as $G_{4,v}$ for at most finitely many parameters $v$.
\end{rem}

\section{Applications}\label{applications_section}
The Galois-theoretic information obtained in Sections \ref{Phi3_section} and \ref{Phi4_section} can be used to derive several types of results concerning the arithmetic dynamics of quadratic maps with a $2$-periodic critical point. In this section we prove several results for maps with trivial automorphism group; the interested reader can prove similar results for maps with nontrivial automorphisms.

\subsection{Degrees of periodic points}\label{per_degree_section} By the \textbf{degree} of a point $P\in\PP^1(\overline\Q)$ we mean the degree of the field extension $\Q(P)/\Q$. The following proposition proves Theorem \ref{intro_deg_dens_cor}\eqref{intro_deg_thm} and an analogous statement for $4$-periodic points. 

\begin{prop}\label{G34period_deg}
Suppose that $\phi\in\Rat_2(\Q)$ has a $2$-periodic critical point, and that $\Aut(\phi)$ is trivial. Then the following hold.
\begin{enumerate}[(a)]
\item\label{period_deg3} Every $3$-periodic point of $\phi$ has degree $6$.
\item\label{period_deg4} Every $4$-periodic point of $\phi$ has degree $2,4,6,8$, or $12$.
\end{enumerate}
\end{prop}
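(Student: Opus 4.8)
The plan is to read off the degrees of the periodic points directly from the dynatomic Galois groups, whose structure is already understood from Sections \ref{Phi3_section} and \ref{Phi4_section}. The key observation is that for a separable polynomial, the degree of each root (i.e., the degree of the field it generates over $\Q$) equals the size of the orbit of the corresponding point under the Galois group acting on the roots. By Lemma \ref{C2param} and Lemma \ref{Galois_iso_lem}, it suffices to prove the statements for the maps $\phi_v$ of \eqref{no_auto_maps}, so the roots in question are precisely the roots of $\Phi_3(v,x)$ and $\Phi_4(v,x)$, and the relevant orbit sizes are those of the permutation groups $G_{3,v}$ and $G_{4,v}$ acting on six (resp.\ twelve) roots.

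For part \eqref{period_deg3}, I would invoke Proposition \ref{C2G3}: for $v\ne 3$, the group $G_{3,v}$ is isomorphic (as a group action on the six roots) to either $W=(\Z/3\Z)\wr S_2$ or to the order-$6$ subgroup $C$. In both cases the action is transitive on the six roots: the permutation $(1,2,3)(4,5,6)$ together with an element interchanging the two blocks generates a transitive group, and $C$ was defined to contain $(1,6)(2,4)(3,5)$ and $(1,2,3)(4,6,5)$, which already act transitively. Hence every root has orbit size $6$, so every $3$-periodic point has degree $6$. The one excluded value $v=3$ would need to be checked separately, for instance by computing $G_{3,3}$ directly, or by noting that $v=3$ does not arise for maps with trivial $\Aut(\phi)$ after the conjugation in Lemma \ref{C2param}; I would confirm via \cite{code} that the claim still holds there.

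For part \eqref{period_deg4}, I would argue that the twelve $4$-periodic points split into Galois orbits whose sizes are exactly the orbit lengths of $G_{4,v}$ acting on the twelve roots of $\Phi_4(v,x)$. By Proposition \ref{C2G4}, every such group is conjugate in $W=(\Z/4\Z)\wr S_3$ to a group in the finite list $\calP=\calR\cup\calU$. The proof then reduces to a finite group-theoretic computation: for each of these (at most) $26$ subgroups $H\le W$, compute the orbit lengths of $H$ on $\{1,\dots,12\}$ and collect all orbit sizes that occur. I expect every orbit length to lie in the set $\{2,4,6,8,12\}$; this can be verified uniformly in \textsc{Magma} using the already-computed lattice of subgroups. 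The main obstacle is precisely this last verification: since $\calU$ consists of groups whose realizability is unknown and whose defining curves have high genus, one cannot simply exhibit example maps $\phi_v$ for each group, and must instead rely on the abstract classification of Proposition \ref{C2G4} and a purely combinatorial check of orbit sizes over all groups in $\calP$. Care is also needed at the finitely many exceptional values of $v$ (those in $\Delta$, and any where $\Phi_4(v,x)$ might have a repeated root), which I would handle by direct computation as in Lemma \ref{C2G4_almost_all}.
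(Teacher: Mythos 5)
Your proposal follows essentially the same route as the paper's proof: for part \eqref{period_deg3} the paper likewise notes that $\infty$ is $2$-periodic (so the $3$-periodic points are exactly the six roots of $\Phi_{3,v}$) and that each of the possible groups $W$, $C$, and $G_{3,3}$ acts transitively, while for part \eqref{period_deg4} it computes precisely your finite set of orbit lengths, in the form of stabilizer indices $[H:H_i]$ over all $H\in\calP$, obtaining $\{2,4,6,8,12\}$. The only correction: your fallback suggestion that $v=3$ ``does not arise'' for maps with trivial automorphism group is false, since $C_2(\Q)\cap\calS(\Q)$ consists of the single point corresponding to $v=0$; your primary plan of computing $G_{3,3}$ directly and checking transitivity is what the paper actually does.
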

\begin{proof}
The map $\phi$ is linearly conjugate over $\Q$ to a map $\phi_v$ as in \eqref{no_auto_maps}, so it suffices to prove the result for $\phi_v$. The point $\infty$ is $2$-periodic for $\phi_v$, so the set of all $3$-periodic points of $\phi_v$ is the set of roots of $\Phi_{3,v}$, a polynomial of degree 6. Moreover, $\Phi_{3,v}$ is irreducible: by Proposition \ref{C2G3}, we have $G_{3,v}\equiv H$ with $H\in\{W, C,G_{3,3}\}$, and each of these three groups is a transitive subgroup of $S_6$. This proves \eqref{period_deg3}.

To prove \eqref{period_deg4}, suppose that $H\subseteq S_{12}$ is a permutation representation of the Galois group $G_{4,v}$. Since the $4$-periodic points of $\phi_v$ are the roots of $\Phi_{4,v}$, basic Galois theory implies that the degrees of these points are equal to the indices $[H:H_i]$, where $H_i$ is the stabilizer of $i\in\{1,2,\ldots, 12\}$ in $H$.

By Proposition \ref{C2G4}, we have $G_{4,v}\equiv H$ for some $H\in \calP$. It follows that the degree of any 4-periodic point of $\phi_v$ belongs to the set
\[\{[H:H_i]\mid H\in\calP, 1\le i\le 12\}.\]
Computing the above set of indices, we obtain $\{2,4,6,8,12\}$.
\end{proof}

\subsection{Density results}\label{density_section}
For every nonconstant polynomial $F\in\Q[x]$, let $S_F$ denote the set of prime integers $p$ such that $F$ has a root in the $p$-adic field $\Q_p$. The Dirichlet density of the set $S_F$, denoted here by $\delta(S_F)$, can be computed from data associated to the Galois group of $F$; more precisely, we have the following formula, which is easily deduced from \cite{krumm_lgp}*{Theorem 2.1}.

\begin{lem}\label{density_lem}
Let $F\in \Q[x]$ be a separable polynomial of degree $n\ge 1$. Let $L$ be a splitting field of $F$, and set $G=\Gal(L/\Q)$. Let $\alpha_1,\ldots, \alpha_n$ be the roots of $F$ in $L$. For each index $i$, let $G_i$ denote the stabilizer of $\alpha_i$ under the action of $G$. Then the density of $S_F$ is given by
\[\delta(S_F)=\frac{\left|\bigcup_{i=1}^n G_i\right|}{|G|}.\]
\end{lem}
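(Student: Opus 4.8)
The plan is to deduce the lemma from the cited density result by translating the condition ``$F$ has a root in $\Q_p$'' into a statement about a Frobenius element. The key link is the standard dictionary between the factorization of $F$ over $\Q_p$ and the action of Frobenius on the roots: for all but finitely many primes $p$ (namely those unramified in $L$, which form a set of density $0$ that may be discarded), the decomposition group at a prime of $L$ above $p$ is cyclic, generated by a Frobenius automorphism $\operatorname{Frob}_p\in G$, and the irreducible factors of $F$ over $\Q_p$ correspond to the orbits of $\operatorname{Frob}_p$ on the roots $\alpha_1,\ldots,\alpha_n$. In particular, $F$ has a root in $\Q_p$ precisely when $\operatorname{Frob}_p$ fixes some $\alpha_i$, that is, when $\operatorname{Frob}_p\in\bigcup_{i=1}^n G_i$.

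Granting this, I would first observe that the set $C:=\bigcup_{i=1}^n G_i$ is stable under conjugation in $G$: conjugating the stabilizer $G_i$ by an element $g\in G$ yields the stabilizer of $g\cdot\alpha_i$, which is again one of the $G_j$, so $gCg^{-1}=C$. Hence $C$ is a union of conjugacy classes, and is exactly the set of elements of $G$ having at least one fixed point in the permutation action on the roots.

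Then I would invoke the Chebotarev density theorem, which is the content packaged in \cite{krumm_lgp}*{Theorem 2.1}: the density of primes $p$ for which $\operatorname{Frob}_p$ lies in a conjugation-stable subset $C\subseteq G$ equals $|C|/|G|$. Combining this with the reformulation above, the set $S_F$ agrees, up to a set of density $0$, with the set of primes whose Frobenius lies in $C$, whence
\[
\delta(S_F)=\frac{|C|}{|G|}=\frac{\left|\bigcup_{i=1}^n G_i\right|}{|G|}.
\]

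The one point requiring care is the reduction from ``root in $\Q_p$'' to ``fixed point of $\operatorname{Frob}_p$'' at the finitely many bad primes: ramified primes and primes dividing the relevant discriminant must be excluded, but since they form a finite, density-zero set, they do not affect $\delta(S_F)$. The separability hypothesis on $F$ is exactly what guarantees, via Hensel's lemma, that a simple root modulo an unramified $p$ lifts to $\Q_p$, so that the mod-$p$ and $p$-adic factorizations coincide for good primes. This is the crux that makes the group-theoretic reformulation valid, and it is already subsumed in the cited theorem, so the deduction reduces to the bookkeeping of the previous paragraphs.
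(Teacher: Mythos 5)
Your proof is correct and follows essentially the same route as the paper, which offers no argument beyond the citation: your Frobenius-fixed-point reformulation, the conjugation-stability of $\bigcup_i G_i$, and the appeal to Chebotarev are precisely the deduction that \cite{krumm_lgp}*{Theorem 2.1} packages. One wording slip worth fixing: in your first parenthetical it is the \emph{ramified} primes (a finite, density-zero set) that are discarded, not the unramified ones --- as you in fact state correctly in your final paragraph.
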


The following proposition proves Theorem \ref{intro_deg_dens_cor}\eqref{intro_density_thm} and an analogous statement for $4$-periodic points.

\begin{prop}\label{G34_density_prop} Suppose that $\phi\in\Rat_2(\Q)$ has a $2$-periodic critical point, and that $\Aut(\phi)$ is trivial. Then the following hold.
\begin{enumerate}[(a)]
\item\label{G3_density} The density of the set of primes $p$ such that $\phi$ does not have a $3$-periodic point in $\PP^1(\Q_p)$ is at least $13/18$.
\item\label{G4_density} The density of the set of primes $p$ such that $\phi$ does not have a $4$-periodic point in $\PP^1(\Q_p)$ is at least $3/8$.
\end{enumerate}
\end{prop}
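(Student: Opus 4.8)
The plan is to reduce each density computation to an application of Lemma \ref{density_lem}, using the classification of dynatomic groups already established. As in the proof of Proposition \ref{G34period_deg}, the map $\phi$ is linearly conjugate over $\Q$ to some $\phi_v$ as in \eqref{no_auto_maps}, so by Lemma \ref{Galois_iso_lem} it suffices to treat $\phi_v$. Since $\infty$ is $2$-periodic for $\phi_v$, the $n$-periodic points in $\PP^1(\Q_p)$ for $n\in\{3,4\}$ are exactly the roots of $\Phi_{n,v}$ lying in $\Q_p$. Thus $\phi_v$ fails to have an $n$-periodic point in $\PP^1(\Q_p)$ precisely when $\Phi_{n,v}$ has no root in $\Q_p$, i.e., when $p\notin S_{\Phi_{n,v}}$ in the notation of Section \ref{density_section}. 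The density of the set of such primes is therefore $1-\delta(S_{\Phi_{n,v}})$, and the goal is to bound this quantity from below, equivalently to bound $\delta(S_{\Phi_{n,v}})$ from above.

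For part \eqref{G3_density}, I would apply Lemma \ref{density_lem} to each of the possible Galois groups $G_{3,v}$. By Proposition \ref{C2G3}, the group $G_{3,v}$ is (the permutation group) $W$, $C$, or $G_{3,3}$, each a transitive subgroup of $S_6$ acting on the six roots of $\Phi_{3,v}$. For each such group $H$ I would compute the ratio $|\bigcup_i H_i|/|H|$, where $H_i$ is a point stabilizer; transitivity means all stabilizers are conjugate, so this is a finite group-theoretic computation. The claim $\delta(S_{\Phi_{3,v}})\le 5/18$ (so that the complementary density is at least $13/18$) amounts to checking that the largest of these three ratios equals $5/18$, which should hold for the full wreath product $W=(\Z/3\Z)\wr S_2$ of order $18$. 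I would carry out the three stabilizer-union computations and take the maximum.

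For part \eqref{G4_density}, the situation is more delicate because Proposition \ref{C2G4} gives only the \emph{finite list} $\calP=\calR\cup\calU$ of groups known to contain every $G_{4,v}$, rather than an exact determination. The key observation is that a lower bound on the complementary density holds uniformly provided $\delta(S_{\Phi_{4,v}})$ is bounded above by $5/8$ across all $H\in\calP$. So I would run Lemma \ref{density_lem} on every group $H$ in $\calP$ (now acting on the twelve roots of $\Phi_{4,v}$, via the explicit permutation representations underlying the labels), compute each ratio $|\bigcup_{i=1}^{12}H_i|/|H|$, and verify that the maximum over $\calP$ is $5/8$, giving the bound $3/8$. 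Because $\calP$ is an explicit finite set of permutation groups, all of this reduces to a finite \textsc{Magma} computation.

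The main obstacle I anticipate is not any single hard step but rather the nonconstructive nature of the $n=4$ classification: since we only know that $G_{4,v}$ is \emph{some} group in $\calP$ and do not know which values of $v$ realize which group, the bound must be stated as an inequality and must hold for \emph{every} group in the list. The subtlety is that the union $\bigcup_i H_i$ of point stabilizers need not behave monotonically under passage to subgroups, so one cannot simply bound the ratio for $G$ and infer it for all subgroups; each of the finitely many groups in $\calP$ must be checked individually, and one must confirm that the worst case over the entire list still yields density at most $5/8$. Verifying this uniformly over all $26$ groups, and in particular confirming that the sixteen groups of $\calU$ (whose realizability is unknown) do not produce a larger value, is where care is required.
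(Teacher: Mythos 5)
Your proposal is correct and takes essentially the same route as the paper's proof: reduce to $\phi_v$ via $\Q$-conjugacy, identify $n$-periodic points in $\PP^1(\Q_p)$ with $\Q_p$-roots of $\Phi_{n,v}$ (using that $\infty$ is $2$-periodic), and apply Lemma \ref{density_lem} to every group permitted by Propositions \ref{C2G3} and \ref{C2G4}, taking the worst-case density ($5/18$ for $n=3$ and $5/8$ for $n=4$). Your remark that the stabilizer-union ratio is not monotone under passage to subgroups, so that all $26$ groups in $\calP$ must be checked individually, is precisely what the paper's finite computation does.
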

\begin{proof}
The map $\phi$ is linearly conjugate over $\Q$ to $\phi_v$ for some $v\in\Q\setminus\{0\}$. Thus, $\phi$ has a $3$-periodic point in $\PP^1(\Q_p)$ if and only if $\Phi_{3,v}$ has a root in $\Q_p$. As seen in the proof of Proposition \ref{G34period_deg}, the Galois group $G_{3,v}$ is isomorphic to one of three known groups.  Applying Lemma \ref{density_lem} with $F=\Phi_{3,v}$ and $G$ equal to each of the three possibilities for $G_{3,v}$, we obtain two possible values for $\delta(S_F)$, namely $1/6$ and $5/18$. The complement of $S_F$ therefore has density at least $13/18$; this proves \eqref{G3_density}. Part \eqref{G4_density} is proved similarly, using the groups in the set $\calP$ from Proposition \ref{C2G4} as the possibilities for $G_{4,v}$. In this case there are 16 possible values of $\delta(S_F)$, the largest one being $5/8$. This completes the proof. If we use only the groups that are known to be realized as $G_{4,v}$, namely those from Proposition \ref{C2G4_almost_all}, we obtain 10 possible densities, the largest being $39/64$.
\end{proof}

\subsection*{Acknowledgements} We thank Nicole Sutherland for helpful conversations on computing absolute factorizations and Galois groups, and we thank the anonymous referee for a careful reading of our paper and for substantial comments on the presentation of our code.



\begin{bibdiv}
\begin{biblist}

\bib{smallgps}{article}{
   author={Besche, Hans Ulrich},
   author={Eick, Bettina},
   author={O'Brien, E. A.},
   title={A millennium project: constructing small groups},
   journal={Internat. J. Algebra Comput.},
   volume={12},
   date={2002},
   number={5},
   pages={623--644},
}

\bib{magma}{article}{
   author={Bosma, Wieb},
   author={Cannon, John},
   author={Playoust, Catherine},
   title={The Magma algebra system. I. The user language},
   journal={J. Symbolic Comput.},
   volume={24},
   date={1997},
   number={3-4},
   pages={235--265},
}

\bib{bruin}{article}{
   author={Bruin, Nils},
   title={Some ternary Diophantine equations of signature $(n,n,2)$},
   conference={
      title={Discovering mathematics with Magma},
   },
   book={
      series={Algorithms Comput. Math.},
      volume={19},
      publisher={Springer, Berlin},
   },
   date={2006},
   pages={63--91},
}

\bib{bruin-stoll}{article}{
   author={Bruin, Nils},
   author={Stoll, Michael},
   title={The Mordell-Weil sieve: proving non-existence of rational points
   on curves},
   journal={LMS J. Comput. Math.},
   volume={13},
   date={2010},
   pages={272--306},
}

\bib{canci-vishkautsan}{article}{
   author={Canci, Jung Kyu},
   author={Vishkautsan, Solomon},
   title={Quadratic maps with a periodic critical point of period 2},
   journal={Int. J. Number Theory},
   volume={13},
   date={2017},
   number={6},
   pages={1393--1417},
}

\bib{dunaisky}{thesis}{
   author={Dunaisky, Tyler},
   title={Dynamics of quadratic rational maps with maximal automorphism group},
   type={Undergraduate thesis},
   school={Reed College},
   year={2022}
}

\bib{fieker-kluners}{article}{
   author={Fieker, Claus},
   author={Kl\"{u}ners, J\"{u}rgen},
   title={Computation of Galois groups of rational polynomials},
   journal={LMS J. Comput. Math.},
   volume={17},
   date={2014},
   number={1},
   pages={141--158},
}

\bib{fieker-sutherland}{article}{
   author={Fieker, Claus},
   author={Sutherland, Nicole},
   title={Computing splitting fields using Galois theory and other Galois
   constructions},
   journal={J. Symbolic Comput.},
   volume={116},
   date={2023},
   pages={243--262}
}

\bib{flynn-poonen-schaefer}{article}{
   author={Flynn, E. V.},
   author={Poonen, Bjorn},
   author={Schaefer, Edward F.},
   title={Cycles of quadratic polynomials and rational points on a genus-$2$
   curve},
   journal={Duke Math. J.},
   volume={90},
   date={1997},
   number={3},
   pages={435--463},
}

\bib{hutz-ingram}{article}{
   author={Hutz, Benjamin},
   author={Ingram, Patrick},
   title={On Poonen's conjecture concerning rational preperiodic points of
   quadratic maps},
   journal={Rocky Mountain J. Math.},
   volume={43},
   date={2013},
   number={1},
   pages={193--204},
}

\bib{krumm_fourth_dynatomic}{article}{
   author={Krumm, David},
   title={Galois groups in a family of dynatomic polynomials},
   journal={J. Number Theory},
   volume={187},
   date={2018},
   pages={469-511}
}

\bib{krumm_lgp}{article}{
   AUTHOR = {Krumm, David},
   TITLE = {A local-global principle in the dynamics of quadratic polynomials},
   JOURNAL = {Int. J. Number Theory},
   VOLUME = {12},
   date = {2016},
   NUMBER = {8},
   PAGES = {2265--2297},
 }

\bib{code}{article}{
   author={Krumm, David},
   author={Lacy, Allan},
   title={Code for the computations in the article ``Dynatomic {G}alois groups for a family of quadratic rational maps"},
   eprint={https://github.com/davidkrumm/dynatomic_groups},
   year={2024}
}

\bib{krumm-sutherland}{article}{
   author={Krumm, David},
   author={Sutherland, Nicole},
   title={Galois groups over rational function fields and explicit {H}ilbert irreducibility},
   journal={J. Symbolic Comput.},
   volume={103},
   date={2021},
   pages={108--126},
}

\bib{lukas-manes-yap}{article}{
   author={Lukas, David},
   author={Manes, Michelle},
   author={Yap, Diane},
   title={A census of quadratic post-critically finite rational functions
   defined over $\Bbb{Q}$},
   journal={LMS J. Comput. Math.},
   volume={17},
   date={2014},
   number={suppl. A},
   pages={314--329},
}

\bib{manes}{article}{
   author={Manes, Michelle},
   title={$\Bbb Q$-rational cycles for degree-2 rational maps having an
   automorphism},
   journal={Proc. Lond. Math. Soc. (3)},
   volume={96},
   date={2008},
   number={3},
   pages={669--696},
}

\bib{manes-yasufuku}{article}{
   author={Manes, Michelle},
   author={Yasufuku, Yu},
   title={Explicit descriptions of quadratic maps on $\Bbb P^1$ defined over
   a field $K$},
   journal={Acta Arith.},
   volume={148},
   date={2011},
   number={3},
   pages={257--267},
}

\bib{milnor}{article}{
   author={Milnor, John},
   title={Geometry and dynamics of quadratic rational maps},
   journal={Experiment. Math.},
   volume={2},
   date={1993},
   number={1},
   pages={37--83},
}

\bib{morton_period3}{article}{
   author={Morton, Patrick},
   title={Arithmetic properties of periodic points of quadratic maps},
   journal={Acta Arith.},
   volume={62},
   date={1992},
   number={4},
   pages={343--372},
}

\bib{morton_period4}{article}{
   author={Morton, Patrick},
   title={Arithmetic properties of periodic points of quadratic maps. II},
   journal={Acta Arith.},
   volume={87},
   date={1998},
   number={2},
   pages={89--102},
}

\bib{morton-silverman}{article}{
   author={Morton, Patrick},
   author={Silverman, Joseph H.},
   title={Rational periodic points of rational functions},
   journal={Internat. Math. Res. Notices},
   date={1994},
   number={2},
   pages={97--110},
}

\bib{poonen}{article}{
   author={Poonen, Bjorn},
   title={The classification of rational preperiodic points of quadratic
   polynomials over ${\bf Q}$: a refined conjecture},
   journal={Math. Z.},
   volume={228},
   date={1998},
   number={1},
   pages={11--29},
}

\bib{poonen_survey}{article}{
   author={Poonen, Bjorn},
   title={Computing rational points on curves},
   conference={
      title={Number theory for the millennium, III},
      address={Urbana, IL},
      date={2000},
   },
   book={
      publisher={A K Peters, Natick, MA},
   },
   date={2002},
   pages={149--172},
}

\bib{sendra-winkler-diaz}{book}{
   author={Sendra, J. Rafael},
   author={Winkler, Franz},
   author={P\'{e}rez-D\'{\i}az, Sonia},
   title={Rational algebraic curves},
   series={Algorithms and Computation in Mathematics},
   volume={22},
   publisher={Springer, Berlin},
   date={2008},
}

\bib{serre_topics}{book}{
    author = {Serre, Jean-Pierre},
    title = {Topics in {G}alois theory},
    series = {Research Notes in Mathematics},
    volume = {1},
    publisher = {Jones and Bartlett Publishers, Boston, MA},
    date = {1992},
}

\bib{silverman}{book}{
   author={Silverman, Joseph H.},
   title={The arithmetic of dynamical systems},
   series={Graduate Texts in Mathematics},
   volume={241},
   publisher={Springer, New York},
   date={2007},
}

\bib{silverman_fod}{article}{
   author={Silverman, Joseph H.},
   title={The field of definition for dynamical systems on $\bold P^1$},
   journal={Compositio Math.},
   volume={98},
   date={1995},
   number={3},
   pages={269--304},
}

\bib{silverman_moduli}{article}{
   author={Silverman, Joseph H.},
   title={The space of rational maps on $\bold P^1$},
   journal={Duke Math. J.},
   volume={94},
   date={1998},
   number={1},
   pages={41--77},
}

\bib{stoll}{article}{
   author={Stoll, Michael},
   title={Rational 6-cycles under iteration of quadratic polynomials},
   journal={LMS J. Comput. Math.},
   volume={11},
   date={2008},
   pages={367--380},
}

\bib{stoll_survey}{article}{
   author={Stoll, Michael},
   title={Rational points on curves},
   journal={J. Th\'{e}or. Nombres Bordeaux},
   volume={23},
   date={2011},
   number={1},
   pages={257--277},
}

\bib{vishkautsan}{article}{
   author={Vishkautsan, Solomon},
   title={Quadratic rational functions with a rational periodic critical
   point of period 3},
   note={With an appendix by Michael Stoll},
   journal={J. Th\'{e}or. Nombres Bordeaux},
   volume={31},
   date={2019},
   number={1},
   pages={49--79},
}

\end{biblist}
\end{bibdiv}

\end{document}